\newtheorem{thm}{Theorem}[section]
\newtheorem{lem}[thm]{Lemma}
\newtheorem{prop}[thm]{Proposition}
\newtheorem{cor}[thm]{Corollary}
\newtheorem{rmk}[thm]{Remark}
\newtheorem{ques}[thm]{Question}
\newtheorem{prob}[thm]{Problem}
\newtheorem{ex}[thm]{Example}
\def\O{{\mathcal O}}
\def\L{{\mathcal L}}
\def\M{{\mathcal M}}
\def\P{{\mathbb P}}
\def\I{{\mathcal I}}
\def\Z{{\mathbb Z}}
\def\C{{\mathbb C}}
\def\Pthree{{\mathbb P}^3}
\def\Cl{\mathop{\rm Cl}}
\def\Spec{\mathop{\rm Spec}}
\def\codim{\mathop {\rm codim}}
\def\mod{\mathop{\rm mod}}
\def\m{\mathop{\rm m}}
\def\fm{\mathfrak m}
\def\depth{\mathop{\rm depth}}
\def\mult{\mathop{\rm mult}}
\def\ra{\rightarrow}
\newcommand{\ubm}[2]{\underbrace{#1}_{#2}}
\newcommand{\cyc}[1]{\langle {#1} \rangle}
\def\rdA{\mathbf A}
\def\rdD{\mathbf D}
\def\rdE{\mathbf E}
\title{Srinivas' problem for rational double points}
\author{John Brevik}
\address{California State University at Long Beach, 
Department of Mathematics and Statistics, Long Beach, CA 90840}
\email{jbrevik@csulb.edu}
\author{Scott Nollet}
\address{Texas Christian University, Department of Mathematics, 
Fort Worth, TX 76129}
\email{s.nollet@tcu.edu}
\subjclass[2000]{Primary: 14B07, 14H10, 14H50}
\begin{document}
\bibliographystyle{plain}

\begin{abstract} 
For the completion $B$ of a local geometric normal domain, 
V. Srinivas asked which subgroups of $\Cl B$ arise as the image of 
the map $\Cl A \to \Cl B$ on class groups as $A$ varies among 
normal geometric domains with $B \cong \hat A$. 
For two dimensional rational double point singularities we show that 
all subgroups arise in this way. 
We also show that in any dimension, every normal hypersurface 
singularity has completion isomorphic to that of a geometric UFD. 
Our methods are global, applying Noether-Lefschetz theory to linear 
systems with non-reduced base loci.
\end{abstract}

\maketitle

\section{Introduction}

V. Srinivas posed several interesting problems about class groups of 
noetherian local normal domains in his survey paper on geometric methods in 
commutative algebra \cite[$\S 3$]{srinivas}. 
Recall that if $A$ is such a ring with completion $\hat A$, 
then there is a well-defined injective map on divisor class groups 
$j: \Cl A \to \Cl \hat A$~\cite[\S1, Proposition 1]{samuel} arising from valuation theory. 
For geometric local rings, {\em i.e.}, localizations of 
$\mathbb C$-algebras of finite type, Srinivas asks about the 
possible images of the map $j$
\cite[Questions 3.1 and 3.7]{srinivas}: 

\begin{prob}\label{one}{\em 
Let $B$ be the completion of a local geometric normal domain. 
\begin{enumerate}
\item[(a)] What are the possible images of $\Cl A \hookrightarrow \Cl B$ as $A$ ranges 
over all geometric local normal domains with $\widehat A \cong B$?
\item[(b)] Is there a geometric normal local domain $A$ with $\hat A \cong B$ and 
$\Cl A = \langle \omega_B \rangle \subset \Cl B$?
\end{enumerate}
\em}\end{prob}

While we are mainly interested in (a), let us  
review the progress on Problem \ref{one}(b). Since the dualizing 
module $\omega_B$ is necessarily in the image of 
$\Cl A \hookrightarrow \Cl B$ whenever $A$ is a quotient of a 
regular local ring \cite{murthy}, part (b) asks whether the 
{\it smallest} {\em a priori} image is possible. 
Moreover, if $B$ is Gorenstein, then $\omega_B$ is trivial in $\Cl B$ 
and part (b) asks whether $\Cl A = 0$ is possible; in other words, whether 
is $B$ the completion of a unique factorization domain (UFD). 
For arbitrary rings, Heitmann proved that $B$ is the completion of a UFD if and 
only if $B$ is a field, a discrete valuation ring, or $\dim B \geq 2$, 
$\depth B \geq 2$ and every integer is a unit in $A$ \cite{heit1}, 
but for $\dim A \geq 2$ his constructions produce rings that are far 
from geometric.  
 
For Problem \ref{one} (b) as stated, perhaps the most famous 
result is Grothendieck's solution to Samuel's conjecture 
\cite[XI, Corollaire 3.14]{SGA2}, which says that if $B$ is a complete intersection which is factorial in codimension $\leq 3$ 
then $\Cl B = 0$ so that $B$ is already a UFD. In particular, 
any complete intersection ring $B$ of dimension $\geq 4$ with an isolated singularity is a UFD. 
Parameswaran and Srinivas showed that such rings $B$ of dimension 
$d = 2,3$ are completions of UFDs \cite{parasrini}, 
extending the earlier result of Srinivas for rational double points \cite{srinivas1}. 
Hartshorne and Ogus proved that any ring $B$ with an isolated singularity and depth $\geq 3$ 
having codimension $\geq 3$ in a regular local ring is a UFD \cite{HO}. 
Parameswaran and Van Straten answered Problem \ref{one} (b) positively for 
arbitrary normal surface singularities \cite{paravanstraten}, the only result 
that discusses non-trivial subgroups. 
We offer the following:

\begin{thm}\label{ufd} 
Let $B$ be a completed normal hypersurface singularity.
Then there exists a hypersurface $X \subset \P^n_\C$ and a 
point $p \in X$ such that $A=\O_{X,p}$ is a UFD and $\widehat A \cong B$. 
\end{thm}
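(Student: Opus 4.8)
The plan is to realize $B$ as the completion of the local ring at a point of a projective hypersurface that is as general as possible subject to carrying the prescribed singularity, and then to use a Noether--Lefschetz theorem with non-reduced base locus to show that none of the formal divisor classes of $B$ is realized algebraically, so that the chosen local ring is a UFD. First I would write $B\cong\C[[x_1,\dots,x_n]]/(f)$, so $\dim B=n-1$ and normality means the singular locus has codimension $\ge 2$. I would then replace $f$ by a polynomial $g$ with $\C[[x]]/(g)\cong B$; when the singularity is isolated (automatic for surfaces, $n=3$) this is finite determinacy, and in general one algebraizes via Artin approximation. Placing $p=[0:\cdots:0:1]\in\Pn$ as the origin of the affine chart $\A^n$, I would fix a large degree $d$ and consider the linear system $\mathcal L\subset|\O_{\Pn}(d)|$ of hypersurfaces whose local equation at $p$ agrees with $g$ to the determinacy order $N$. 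Every $X\in\mathcal L$ then has $\widehat{\O_{X,p}}\cong B$, and by Bertini a general member is smooth away from $p$, hence normal.

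Next I would reduce the UFD property to a global class-group statement. Since $\widehat{\O_{X,p}}=B$ is normal, so is $A=\O_{X,p}$. With $X$ smooth away from the isolated point $p$ and $\dim X\ge 2$, the restriction map $\Cl X\to\Cl A$ is surjective (every height-one prime of $A$ is the germ of the closure of a global prime divisor on $X$) and it sends the Cartier hyperplane class $H$ to $0$. Conceptually $\Cl A\cong\Cl X/\Pic X$, as the quadric cone (where this quotient is $\Z/2$) illustrates; but for the proof it is enough that $\Cl X=\Z\cdot H$ forces $\Cl A=0$. Thus the theorem reduces to showing that a very general $X\in\mathcal L$ has class group $\Z\cdot H$, i.e.\ that the divisor classes of $B=\widehat A$, however large $\Cl B$ may be, stay non-algebraic on $X$.

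This is exactly a Noether--Lefschetz statement for $\mathcal L$, whose base locus is the non-reduced fat point at $p$ carved out by the $N$-jet of $g$. For surfaces I would invoke the authors' Noether--Lefschetz theorem with base locus to conclude that, off a countable union of proper subvarieties of $\mathcal L$, one has $\Cl X=\Pic X=\Z\cdot H$. In the isolated case with $n\ge 4$, Grothendieck--Lefschetz already forces factoriality---indeed $B$ itself is then a UFD, as noted above---so no new argument is needed there.

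The main obstacle is the Noether--Lefschetz input itself: one must show that the imposed fat base point does not create extra algebraic classes and that the monodromy of the family still acts irreducibly on the relevant vanishing cohomology despite the prescribed singularity at $p$, which is what pins the class number at one for the very general member. The remaining difficulty is the non-isolated case in dimension $\ge 4$, where $X$ cannot be smooth away from $p$, the clean surjection $\Cl X\to\Cl A$ must be replaced by an analysis along the whole singular locus, and the algebraization of $f$ to a polynomial has to be re-examined; I expect this, together with verifying the monodromy hypotheses of the base-locus Noether--Lefschetz theorem, to be the technical heart of the argument.
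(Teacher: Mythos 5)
Your strategy is sound for \emph{isolated} singularities, where it essentially reproduces the paper's alternative argument (the example following the proof, based on the Mather--Yau theorem): impose a high-order jet of $f$ at $p$ via a punctual base locus, apply the Noether--Lefschetz theorem with base locus (Theorem~\ref{gens}) to kill $\Cl \O_{X,p}$, and use finite determinacy to identify $\widehat \O_{X,p}$ with $B$. Two corrections even there. First, Theorem~\ref{gens} is stated for $\P^n_\C$ in every dimension, so there is no reason to restrict the Noether--Lefschetz input to surfaces; relatedly, no monodromy hypothesis has to be verified by the user --- the only thing to check is that the base locus is properly contained in a normal hypersurface (superficiality). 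Second, your claim that for isolated singularities with $n \geq 4$ ``$B$ itself is then a UFD'' by Grothendieck--Lefschetz is false when $n=4$: Grothendieck's theorem requires $\dim B \geq 4$, i.e.\ $n \geq 5$, and $B = \C[[x,y,z,w]]/(xy-zw)$ is a $3$-dimensional isolated hypersurface singularity with $\Cl B \cong \Z$. This gap is repairable, since your fat-point argument works verbatim in $\P^4$, but as written the case $n=4$ falls through the dichotomy.

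The serious gap is the non-isolated case, which you explicitly leave open and which is precisely the content the paper advertises (``uniformly handles all dimensions \dots\ need not be isolated''). Your jet-truncation approach cannot be patched here: a hypersurface germ is finitely determined \emph{only if} its singularity is isolated, so there is no order $N$ such that all equations agreeing with $f$ modulo $\fm^{N+1}$ define isomorphic completions. The paper's missing idea is Lemma~\ref{Ruiz} (Ruiz, \cite[V, Lemma 2.2]{ruiz}): if $f - g \in \fm \cdot J_f^2$, where $J_f$ is the Jacobian ideal, then $\C[[x_1,\dots,x_n]]/(f) \cong \C[[x_1,\dots,x_n]]/(g)$; this is determinacy \emph{relative to $J_f$} and needs no isolatedness. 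The construction then runs as follows: decompose $J_f$ into primary components in the polynomial ring, let $K = (k_1,\dots,k_r)$ be the intersection of the components whose primes contain the origin, so that $(J_f)_\fm = K_\fm$, and take as base locus $Y = V(f, k_1^3, \dots, k_r^3)$. This $Y$ is supported on the union of the components of the singular locus of $V$ through $p$, hence has codimension $\geq 3$ in $\P^n$ by normality of $V$ at $p$, so Theorem~\ref{gens} gives $\Cl \O_{X,p} = 0$ for the very general $X \supset Y$; and $X$ has local equation $g = f + \sum a_i k_i^3$ with $f - g \in K^3$, whose image in the completion lies in $J_f^3 \subset \fm J_f^2$, so Ruiz's lemma gives $\widehat \O_{X,p} \cong B$. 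Without this (or some substitute for it), your proposal proves the theorem only for isolated singularities, not the statement as given. (A final minor point: since $B$ is by hypothesis a \emph{completed} geometric hypersurface singularity, $f$ can be taken to be a polynomial from the start, so no Artin approximation is needed.)
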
 

Unlike the results mentioned above for isolated singularities, our proof 
is very short (about a page) and uniformly handles all dimensions, 
though it only addresses rings $B$ of the form 
$\mathbb C[[x_1,\dots,x_n]]/(f)$ where $f$ defines a variety $V$ 
normal at the origin. We expect to extend our method 
to all normal local complete intersection singularities. 

On the other hand, we have seen no work addressing the more difficult 
Question \ref{one} (a) since it appeared ten years ago. Noting an example 
of a ring $B$ for which $\Cl B \cong \mathbb C$ but the images $\Cl A$ 
are finitely generated \cite[Example 3.9]{srinivas}, Srinivas 
seems pessimistic about the chances, saying that Problem \ref{one} (b) is 
``probably the only reasonable general question in the direction of 
Problem \ref{one} (a)", but his example at least suggests the following: 

\begin{ques}\label{three}\label{finite}{\em
For $B$ as above, is every finitely generated subgroup 
$H \subset \Cl B$ containing $\langle \omega_B \rangle$ the image of 
$\Cl A$ for a local geometric normal domain $A$ with $\widehat A \cong B$? 
\em}\end{ques}

Our first result is a positive answer for the best-understood surface singularities: 

\begin{thm}\label{ratdouble}
Let $B$ be the completion of the local ring of a rational double point on a surface. 
Then for any subgroup $H \subset \Cl B$, there is a local geometric normal ring $A$ 
with $B \cong \hat A$ and $H = \Cl A \subset \Cl B$. 
\end{thm}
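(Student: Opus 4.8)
The plan is to realize $B$ as the completed local ring $\hat\O_{S,p}$ of a point on a projective normal surface $S \subset \P^3_\C$ carrying a single rational double point at $p$, and to engineer the global geometry of $S$ so that the local class group $\Cl \O_{S,p}$ equals exactly $H$. Setting $A = \O_{S,p}$ then produces a geometric normal local domain with $\hat A \cong B$ (the completion depends only on the analytic type of the rational double point) and $\Cl A = H$. Note that the case $H = 0$ already follows from Theorem \ref{ufd}, since every rational double point is a hypersurface singularity; the real content is the realization of nontrivial $H$.

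First I would set up the local-global dictionary. Let $\pi\colon \tilde S \to S$ be the minimal resolution, with exceptional $(-2)$-curves $E_1, \dots, E_r$ spanning the negative-definite $ADE$ root lattice $L$, so that $\Cl B \cong L^*/L$ is the discriminant (center) group, the class of a Weil divisor $D$ being computed by $D \mapsto [(\tilde D \cdot E_i)_i] \in L^*/L$ from the strict transform $\tilde D$. For a surface with a single singular point there is a surjection $\Cl S \twoheadrightarrow \Cl \O_{S,p}$ with kernel $\Pic S$, together with the injection $\Cl \O_{S,p} \hookrightarrow \Cl B$; hence the image of the composite $\Cl S \to \Cl B$ is precisely $\Cl \O_{S,p}$, and it suffices to force this image to equal $H$. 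On the lattice side, $H$ corresponds to an overlattice $L \subseteq M \subseteq L^*$ with $M/L = H$, and I would fix elements $m_1, \dots, m_k \in M$ whose classes generate $H$, reading each $m_j$ as a prescribed intersection profile $(m_j \cdot E_i)_i$ against the exceptional configuration.

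Next I would realize this profile geometrically. For each $m_j$ I want a curve $C_j$ on $S$ through $p$ whose branch and tangency data are chosen so that its strict transform $\tilde C_j$ meets the $E_i$ with the prescribed multiplicities; this is a local computation on the $ADE$ resolution, matching branches to the proximity structure of the Dynkin diagram (for instance, for $A_n$ a single smooth branch meeting one end of the chain realizes a generator of $\Z/(n+1)$). I would then assemble a linear system of surfaces of large degree $d$ whose base locus $Z$ is a non-reduced scheme encoding both the desired rational double point at $p$ and the curves $C_j$, so that every member is singular at $p$ of the correct type and contains the $C_j$.

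The crux is then a Noether-Lefschetz statement for this linear system: for a very general member $S$, the group $\Pic \tilde S$ should be generated by the hyperplane class, the exceptional curves $E_i$, and the strict transforms $\tilde C_j$, with no further classes appearing. This is exactly the Noether-Lefschetz theory for linear systems with non-reduced base loci advertised in the abstract, and I expect it to be the main obstacle: the classical theorems concern smooth surfaces with reduced or empty base locus, whereas here the members are singular along a non-reduced $Z$, so one must control the Picard group of the resolution rather than of $S$ itself, while simultaneously guaranteeing that the very general member is normal with exactly the prescribed rational double point and no unexpected singularities. Granting this, the image of $\Cl S \to \Cl B$ is generated by the images of these generators: the hyperplane class and the $E_i$ map to $0$, while $\tilde C_j \mapsto m_j \bmod L$, so the image is $(\langle m_1, \dots, m_k \rangle + L)/L = M/L = H$. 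Hence $\Cl A = \Cl \O_{S,p} = H$, completing the argument.
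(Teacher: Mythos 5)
Your strategy coincides with the paper's: engineer a non-reduced base locus $Z \subset \P^3$ that forces the prescribed singularity at $p$ on the very general surface $S \supset Z$ and whose codimension-two components are curves generating $H$, apply Noether--Lefschetz with base locus to conclude $\Cl \O_{S,p}$ is generated by those curves, and identify their classes in $\Cl B$ via strict transforms. Note, however, that the step you single out as ``the main obstacle'' and then grant --- controlling the divisor classes of the very general member --- is not actually the missing piece: it is precisely Theorem \ref{gens}, already established via \cite{BN}, and it gives generation of $\Cl \O_{S,p}$ directly, with no need to control $\Pic$ of the resolution.

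The genuine gap is that you never produce the base loci, and that is where essentially all of the work lies. Two things must be verified for an explicit ideal $I_Z$, and neither is formal. First, the local equation of the very general member is a combination of the generators of $I_Z$ with \emph{unit} coefficients --- e.g.\ $ax^2+by^2z+z^{n-1}+cxy^n$ for the paper's $\rdD_n$ ideal $(x^2,y^2z,z^{n-1},xy^n)$ --- and showing that this is analytically the standard $\rdD_n$ form requires real power-series input: the root-extraction Lemma \ref{comproots} and the inductive coordinate change of Lemma \ref{dnen}. One cannot simply decree that every member of the system has the correct singularity type; designing $Z$ so that this verification goes through is the delicate part. Second, your plan prescribes the intersection profile of $\tilde C_j$ with the $E_i$ in advance, but the resolution does not exist until $S$ does: the class of $C_j$ can only be computed a posteriori by blowing up the actual very general surface, and this computation constrains the design of $Z$ nontrivially. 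For instance, to realize $\cyc{u_{n-1}}$ or $\cyc{u_n}$ for $\rdD_n$ with $n$ even, the paper must use the double structure $(x^2, y^2z-z^{2r-1}, y^5-z^{5r-5})$ on a smooth curve plus an embedded point, and for the full group $H=\Cl B$ (which needs two generators when $\Cl B \cong \Z/2\Z\oplus\Z/2\Z$) it abandons the base-locus method altogether and instead exhibits explicit surfaces such as $x^2+y^2z-z^{n-1}$ carrying two curves of known classes. There is also the constraint, which your ``prescribed profile'' language ignores, that the strict transform of a curve smooth at $p$ can only meet an $E_i$ occurring with coefficient $1$ in the fundamental cycle. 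As it stands, your proposal reproduces the paper's proof outline, but the constructions and power-series verifications that constitute the actual proof are missing.
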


There is a well known classification of the surface rational double points, 
namely $\rdA_n, \rdD_n, \rdE_6, \rdE_7, \rdE_8$ \cite{lipman} and in each case 
we produce an actual algebraic surface 
$S \subset \Pthree_\C$ and a rational 
double point $p \in S$ such that $\Cl \O_{S,p} \cong H$. 
This result comes as a surprise in view of Mohan Kumar's result \cite{MK}. 
He proved that for almost all $\rdA_n$ and $\rdE_n$ type singularities 
on a {\em rational} surface over $\C$, the analytic isomorphism 
class {\it determines} the algebraic isomorphism class; the exceptions are 
$\rdA_7, \rdA_8$ and $\rdE_8$, for which there are two possibilities 
each. In particular, the possibility for $\Cl (A) \hookrightarrow \Cl (\widehat A)$ 
is unique, except for these cases (and also the $\rdE_8$ case, since the complete local ring is a UFD). 

While the results above are local algebraic, our method of proof uses global algebraic geometry. 
Our idea in each case is to exhibit a base locus 
$Y \subset \mathbb P^n$ and a point $p \in Y$ constructed so that the 
general hypersurface $X$ containing $Y$ has the desired singularity type 
at $p$, meaning that $B \cong \widehat \O_{X,p}$. 
Taking $A = \O_{X,p}$, an honest local ring from the variety $X$, we can read 
off the generators of $\Cl A$ by the following consequence of our extension 
of the Noether-Lefschetz theorem to linear systems with base locus \cite{BN}. 

\begin{thm}\label{gens} Let $Y \subset \P_\C^n$ be a closed subscheme which is properly 
contained in a normal hypersurface and suppose $p \in Y$. Then the very general hypersurface 
$X$ of degree $d \gg 0$ containing $Y$ is normal and $\Cl \O_{X,p}$ is generated by 
supports of the codimension $2$ irreducible components of $Y$. 
In particular, $\codim Y > 2 \Rightarrow \Cl \O_{X,p} = 0$. 
\end{thm} 

After applying Theorem \ref{gens}, we use power series techniques to compute 
the images of the local class groups to obtain the results. 
For example, the proof of Theorem \ref{ufd} combines Theorem \ref{gens} with 
a power series lemma due to Ruiz \cite{ruiz}. 
The constructions in Theorem \ref{ratdouble} are more complicated and rather interesting. 
For $\rdA_n$ singularities we used a Cohen-Macaulay multiplicity structure 
on a smooth curve for $Y$, but for $\rdD_n$ singularities we found it necessary 
to use a line with an embedded point at $p$. In the case of a $\rdD_n$ 
singularity given locally by $x^2 + y^2 z + z^{n-1} = 0$, the completed local 
ring has class group $\Z / 4 \Z$ when $n$ is odd and 
$\Z / 2 \Z \oplus \Z / 2 \Z$ when $n$ is even. 
In the odd case there is only one subgroup of order $2$, 
but in the even case there are {\em three} of them, 
two of which are indistinguishable up to automorphism of the complete local ring 
since their generators correspond to conjugate points in the associated 
Dynkin diagram \cite{J2}. In this case we prove the stronger statement that 
each of the two non-equivalent subgroups arises as the class group of a 
local ring on a surface. Finally, in Proposition~\ref{smoothrep} we prove that any divisor class of a rational double point surface singularity has a representative that is smooth at that point.

Regarding organization, we prove Theorem \ref{gens} and Theorem \ref{ufd} 
in sections 2 and 3, respectively. Section 4 is devoted to Theorem \ref{ratdouble}. 
We work throughout over the field $k = \C$ of complex numbers 
except as noted. While the base locus $Y$ should be projective to apply 
Theorem \ref{gens}, we often give a local ideal for $Y \subset \mathbb A^n$ 
and apply the theorem to $\overline Y \subset \P^n$. 

\section{Geometric generators of Local class groups}

In this section we prove Theorem~\ref{gens} from the introduction, 
which identifies the generators of the local class group of a general member 
of a linear system of hypersurfaces at a point lying in the base locus. 
The proof is quite short, being a straightforward consequence of our 
Noether-Lefschetz theorem with base locus \cite{BN} and we present it 
without further ado.  

\begin{proof} The condition that $Y$ is properly contained in a normal hypersurface 
implies that $Y$ has codimension $\geq 2$ and its points of embedding dimension $\geq n$ 
have codimension $\geq 3$, which is to say that $Y$ is {\it superficial} in the language 
used in \cite{BN} (the converse is also easy to see). Thus we may apply 
\cite[Theorems 1.1 and 1.7]{BN} to see that $X$ is normal and $\Cl X$ is (freely) 
generated by $\O_X (1)$ and the supports of the codimension two components of $Y$. 
On the other hand, the natural restriction map $\Cl X \to \Cl \O_{X,p}$ is surjective 
(because height one primes in $\O_{X,p}$ lift to global Weil divisors on $X$), 
so $\Cl \O_{X,p}$ is generated by these same classes. 
Since $\O_X (1)$ and the supports of the codimension two components of $Y$ not passing 
through $p$ have trivial restriction in $\Cl \O_{X,p}$, it follows that $\Cl \O_{X,p}$ is 
generated by the remaining supports of codimension two components of $Y$ which pass through $p$.
\end{proof} 

One can use Theorem~\ref{gens} together with the natural injection $\Cl \O_{X,p} \to \widehat{\Cl \O_{X,p}}$ to calculate class groups of local rings by carrying out calculations inside power series rings. We will use this idea to prove our main results; for further applications, see~\cite{picgps}. The following restates Theorem~\ref{gens} in algebraic terms. 

\begin{cor}
Let $I \subset R = \C[x_1,\dots,x_n]$ be an ideal of height $\geq 2$. 
In the primary decomposition $I = \bigcap q_i$ with $q_i$ a $p_i$-primary ideal, 
assume that $q_i \not \subset p_i^2$ for each height two prime $p_i$. 
Then for the very general $f \in I$ and $A = \C[x_1, \dots, x_n]/(f)$, 
the image of $\Cl (A) \hookrightarrow \Cl (\widehat A)$ 
is generated by the height two primes associated to $I$. 
\end{cor}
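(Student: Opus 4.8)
The plan is to recognize this statement as the purely algebraic shadow of Theorem~\ref{gens}, applied to the scheme $Y = V(I)$, so that the proof reduces to a dictionary between the primary decomposition of $I$ and the hypotheses and conclusion of that theorem. First I would pass to the projective closure $\overline Y \subset \P^n$ of $\Spec R/I$: for $d \gg 0$, the degree-$d$ part of $I$ is exactly $H^0(\I_{\overline Y}(d))$, so a very general $f \in I$ of large degree is synonymous with a very general hypersurface $X$ of degree $d \gg 0$ containing $\overline Y$. On the generator side I would first observe that, since $I$ has height $\geq 2$, every associated prime has height $\geq 2$, whence any \emph{height two} associated prime is automatically minimal (an embedded prime strictly contains a height $\geq 2$ associated prime and so has height $\geq 3$). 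Thus the height two associated primes of $I$ are precisely the generic points of the codimension two irreducible components of $Y$, and their supports $V(p_i)$ are exactly the classes produced by Theorem~\ref{gens}.

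The one substantive point is to confirm that the numerical condition $q_i \not\subset p_i^2$ is the algebraic form of the superficiality hypothesis of Theorem~\ref{gens}, i.e. that $Y$ has codimension $\geq 2$ and that its locus of points of embedding dimension $\geq n$ has codimension $\geq 3$. Codimension $\geq 2$ is the height hypothesis. For the embedding dimension I would localize at a height two prime $p_i$: the ring $R_{p_i}$ is regular local of dimension two with maximal ideal $\fm = p_i R_{p_i}$, and near the generic point of $V(p_i)$ the scheme $Y$ is cut out by $q_i R_{p_i}$. The condition $q_i \not\subset p_i^2$ says precisely that $q_i R_{p_i} \not\subset \fm^2$, so $q_i R_{p_i}$ contains a regular parameter $g$; then $Y$ is locally contained in the smooth hypersurface $V(g)$ and therefore has embedding dimension $\leq n-1 < n$ at the generic point of $V(p_i)$. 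Since the locus $B$ of points of embedding dimension $\geq n$ is closed by upper semicontinuity of tangent-space dimension, any codimension two component $Z$ of $B$ would have generic point $\zeta$ with $\overline{\{\zeta\}} = Z$ a codimension two irreducible subset of $Y$; but such a $Z$ must then equal a codimension two component of $Y$, so $\zeta$ would be the generic point of some $V(p_i)$, which we have just excluded. Hence $B$ has codimension $\geq 3$ and $Y$ is superficial.

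With superficiality established, Theorem~\ref{gens} (equivalently \cite[Theorems 1.1 and 1.7]{BN}) applies: the very general such $X$ is normal and $\Cl X$ is generated by $\O_X(1)$ together with the supports of the codimension two components of $\overline Y$. Restricting to the affine chart kills $\O_X(1)$ and the components supported at infinity, leaving exactly the $V(p_i)$ with $p_i$ a height two associated prime of $I$; these classes, carried through the injection $\Cl A \hookrightarrow \Cl \widehat A$, give the stated description. I expect the main obstacle to lie entirely in the middle paragraph, namely translating $q_i \not\subset p_i^2$ into the geometric superficiality condition and verifying via semicontinuity that controlling $Y$ at the generic points of its codimension two components suffices to bound the whole embedding-dimension locus; the remaining steps are the routine bookkeeping of passing between the ideal $I$, its projective scheme $\overline Y$, and the affine class group.
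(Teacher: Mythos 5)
Your proposal is the paper's intended argument: the paper offers no separate proof of this corollary at all, introducing it only as a statement that ``restates Theorem~\ref{gens} in algebraic terms,'' and your dictionary---projective closure, height two associated primes equal to generic points of codimension two components (minimal for exactly the reason you give), and translation of the primary-decomposition hypothesis into superficiality---is precisely what that sentence leaves to the reader. Your semicontinuity argument reducing the embedding-dimension locus to the generic points of the codimension two components is also the right one (it should formally be run on $\overline Y$ rather than $Y$, but nothing changes, since every codimension two irreducible closed subset of $\overline Y$ is the closure of one of $Y$).

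One step, however, is overclaimed, and it is the very step you flag as the substantive one: $q_i \not\subset p_i^2$ does \emph{not} say precisely that $q_i R_{p_i} \not\subset p_i^2 R_{p_i}$. The localized condition is equivalent to $q_i \not\subset p_i^{(2)}$, where $p_i^{(2)} = p_i^2R_{p_i} \cap R$ is the symbolic square, and in general $p_i^2 \subsetneq p_i^{(2)}$; the implication you need (stated hypothesis implies localized condition) is exactly the direction that can fail. Concretely, for the prime $p$ of the monomial curve $(t^3,t^4,t^5)$ in $\A^3$, the element $x^5+z^3+xy^3-3x^2yz$ lies in $p^{(2)}\setminus p^2$ (it and all its partials vanish on the curve, but it has weighted degree $15$ while $p^2$ is generated in weighted degrees $\geq 16$), so $I=p^{(2)}$ satisfies the hypothesis as written; yet every $f \in I$ lies in $p^2R_p$, so every hypersurface through $V(I)$ is singular along the curve and $A$ is not even normal. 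This is a defect in the corollary as stated rather than in your strategy: the hypothesis must be read as $q_i \not\subset p_i^{(2)}$, equivalently checked after localizing at $p_i$, and with that reading your middle paragraph becomes correct and the proof is complete and faithful to the paper's intent.
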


\section{Completions of unique factorization domains} 
Given a complete local ring $A$, what is the nicest ring $R$ having completion $A$? 
Heitmann shows that $A$ is the completion of a UFD if and only if $A$ is a field, $A$ is a DVR, or $A$ 
has depth $\geq 2$ and no integer is a zero-divisor of $A$ \cite{heit1}; however, his constructions 
need not lead to excellent rings. Loepp shows with minimal hypothesis that $A$ is the 
completion of an {\em excellent} local ring \cite{loepp}, though her construction need not produce a UFD. 
Using geometric methods, Parameswaran and Srinivas show that the completion of the local ring at an isolated 
local complete intersection singularity is the completion of a UFD which is the local ring for a variety \cite{parasrini}. 
We prove the same for normal hypersurface singularities over $\C$ that need not be isolated. We first need a lemma, which can be found in Ruiz' book on power series \cite[V, Lemma 2.2]{ruiz}

\begin{lem}\label{Ruiz} Let $\fm \subset \C [[x_1, \dots, x_n]]$ denote the maximal ideal, fix 
$f \in \fm^2$, and define $J_f=(f_{x_1}, \dots, f_{x_n})$ to be the ideal generated by the partial 
derivatives of $f$. Then for any $g \in \C[[x_1, \dots, x_n]]$ such that
$f-g \in \fm \cdot J_f^2, \C[[x_1, \dots x_n]]/(f) \cong \C[[x_1, \dots, x_n]]/(g)$.
\end{lem}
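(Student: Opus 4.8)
The plan is to prove the stronger statement that $f$ and $g$ are \emph{right equivalent}: there is a formal change of coordinates $\Phi = (\Phi_1,\dots,\Phi_n)$, inducing a $\C$-algebra automorphism $\phi$ of $R = \C[[x_1,\dots,x_n]]$, with $g \circ \Phi = f$. Such a $\phi$ carries the ideal $(g)$ onto $(f)$ and hence descends to the desired isomorphism $R/(f) \cong R/(g)$ (one could settle instead for contact equivalence $u \cdot (g\circ\Phi) = f$, which follows a fortiori). I would obtain $\Phi$ by the homotopy (Thom--Levine/Moser) method: set $h = g-f \in \fm J_f^2$, interpolate by $F_t = f + t h$ for a parameter $t$, and construct a flow of formal automorphisms $\phi_t$ with $\phi_0 = \mathrm{id}$ along which the family is trivialized, i.e.\ $F_t \circ \phi_t = f$; evaluating at $t = 1$ then gives $g \circ \phi_1 = f$.

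The key algebraic step is to control the Jacobian ideals along the path. Since $f \in \fm^2$ we have $J_f \subseteq \fm$ and hence $J_f^2 \subseteq \fm J_f$; writing $h$ as an $\fm$-combination of products of two elements of $J_f$ and differentiating by the product rule, one checks that each $h_{x_i}$ lies in $\fm J_f$ (the only delicate terms, where a derivative produces a second partial $f_{x_j x_i}\in R$, stay in $\fm J_f$ because the surviving factors supply one element of $\fm$ and one of $J_f$). Therefore $\partial_{x_i} F_t = f_{x_i} + t h_{x_i}$ generates an ideal $J_{F_t} \subseteq J_f$ whose generators agree with those of $J_f$ modulo $\fm J_f$, so $J_f = J_{F_t} + \fm J_f$, and Nakayama's lemma forces $J_{F_t} = J_f$ for every $t$. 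Consequently $h \in \fm J_f^2 = \fm J_{F_t}^2 \subseteq \fm^2 J_{F_t}$, so the ``homological equation''
\[
 h + \sum_{i=1}^n \xi_i\, \partial_{x_i} F_t = 0
\]
can be solved with coefficients $\xi_i = \xi_i(x,t) \in \fm^2$ depending polynomially on $t$. The time-dependent vector field $\xi = (\xi_1,\dots,\xi_n)$ integrates to the flow $\phi_t$, and the chain rule gives $\frac{d}{dt}(F_t \circ \phi_t) = \bigl(h + \sum_i \xi_i \partial_{x_i} F_t\bigr)\circ \phi_t = 0$, so $F_t \circ \phi_t$ is constant and equal to $f$.

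I expect the main obstacle to be making the flow rigorous in the \emph{formal} (non-convergent) category: one must show that a time-dependent vector field with components in $\fm^2$ integrates to a genuine automorphism of $\C[[x]]$ and that evaluation at $t=1$ is legitimate. Because the $\xi_i$ lie in $\fm^2$, solving the flow ODE coefficient-by-coefficient in increasing $\fm$-adic order terminates at each order, so the formal solution exists and $\phi_t = \mathrm{id} + (\text{higher order})$ is automatically invertible; I would package this either as a formal ODE in $R[[t]]$ or, to sidestep integration entirely, replace the flow by a Newton-type successive-approximation argument (Tougeron's implicit function theorem) producing $\phi_1$ directly. The remaining verifications---the product-rule computation giving $h_{x_i} \in \fm J_f$ and the bookkeeping of the $t$-dependence of the $\xi_i$---are routine.
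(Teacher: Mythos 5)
The paper itself does not prove this lemma: it quotes Ruiz \cite[V, Lemma 2.2]{ruiz} and remarks that Ruiz's proof (given for convergent power series, via Tougeron's implicit-function-theorem argument) goes through formally. So your primary route, the homotopy/Thom--Levine method, is genuinely different from the cited one, and your algebraic preparation for it is correct: writing $h=g-f$ as an $\fm$-combination of products of two elements of $J_f$, the product rule does give $h_{x_i}\in \fm J_f$ (using $J_f\subseteq\fm$, which follows from $f\in\fm^2$), and Nakayama then yields $J_{F_t}=J_f$, hence $h\in \fm J_f^2\subseteq \fm^2 J_{F_t}$, for each fixed scalar $t$.

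The gap is in the integration step, and it is genuine. First, the homological equation must be solved with coefficients $\xi_i$ whose $t$-dependence is controlled; you assert polynomial dependence but never produce it, and the Nakayama argument, being a one-$t$-at-a-time statement, cannot produce it. What the construction naturally gives is this: writing $h_{x_i}=\sum_j a_{ij}f_{x_j}$ with $a_{ij}\in\fm$, the gradients satisfy $\nabla F_t=(I+tA)\nabla f$, so solving for $\nabla f$ and substituting into $h=\sum \epsilon_{ij}f_{x_i}f_{x_j}$ yields $\xi_i$ that are \emph{rational} in $t$, with denominator $\det(I+tA)\in 1+\fm[t]$. Second, and more seriously, your stated reason that the flow exists and can be evaluated at $t=1$ (``because the $\xi_i$ lie in $\fm^2$, solving coefficient-by-coefficient terminates at each order'') is false for time-dependent fields. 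Counterexample in one variable: $\xi=x^2/(1-t)=\sum_{m\ge 0}x^2t^m$ has every $t$-coefficient in $\fm^2$, but its flow is $x\mapsto x/(1+x\log(1-t))$, whose $t^k$-coefficient contains the term $x^2/k$ for every $k$; the $\fm$-adic orders of the $t$-coefficients do not grow, and evaluation at $t=1$ diverges. The property you actually need is that the $\fm$-order of the $t^m$-coefficient of $\xi_i$ tends to infinity with $m$ (equivalently, $\xi_i$ is polynomial in $t$ modulo each $\fm^N$); granted that, reduction mod $\fm^N$ makes the ODE system triangular precisely because $\xi_i\in\fm^2$ (the derivative of each coefficient involves only coefficients of strictly smaller degree), the truncated solutions are polynomial in $t$ and compatible over $N$, and setting $t=1$ is legitimate. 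This extra property does hold for the $\xi_i$ built from $(I+tA)^{-1}$, since $(1+d)^{-1}=\sum_k(-d)^k$ with $d\in\fm[t]$ has $t$-coefficients of growing order, but none of this is in your write-up, and without it the proof is incomplete. The cleanest repair is the fallback you name but do not carry out --- Tougeron's implicit function theorem producing $\phi_1$ directly --- which is exactly how the source cited by the paper proves the lemma.
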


\begin{rmk}{\em 
Ruiz actually proves Lemma 3.1 for the ring $\mathbb C\{x_1, \dots x_n\}$ of convergent power series and notes that the same proof goes through in the formal case above.
\em}
\end{rmk}

We proceed to prove Theorem~\ref{ufd}.

\begin{proof}
Let $f \in \mathbb C [x_1,x_2, \dots x_n]$ be the equation of a hypersurface $V$ which is 
singular and normal at the origin $p$, corresponding to the maximal ideal $\fm = (x_1, \dots, x_n)$, and let 
$B = \mathbb C [[x_1,x_2, \dots x_n]]/(f)$ be the completion of $\O_{V,p}$. 
The singular locus $D$ of $V$ is given by the ideal $(f) + J_f$, 
where $J_f = (f_{x_1}, \dots, f_{x_n})$. Using primary decomposition in the ring $\mathbb C [x_1, \dots, x_n]$, 
we may write 
\[
J_f =\bigcap_{p_i \subset \fm} q_i \, \cap \, \bigcap_{p_i \not \subset \fm} q_i
\]
where $q_i$ is $p_i$-primary and we have sorted into components that meet 
the origin and those that do not. Denote by $K$ the intersection on the left and $J$ the intersection on the right; localizing at $\fm$ we find that 
$(J_f)_\fm = K_\fm$ because $J_\fm = (1)$.

Now if $K = (k_1, \dots, k_r)$ gives a polynomial generating set for $K$, 
the closed subscheme $Y$ defined by the ideal $I_Y=(f, k_1^3, \dots k_r^3)$ 
is supported on the components of the singular locus of $F = \{f=0\}$ which 
contain the origin, hence has codimension $\geq 3$ by normality of $F$ at 
the origin. The very general hypersurface $X$ containing $Y$ satisfies 
$\Cl \O_{X,p} = 0$ by Theorem \ref{gens}, so $\O_{X,p}$ is a UFD~\cite[Prop. 6.2]{AG}. 
Moreover, $X$ has local equation
\[
g = f + a_1 k_1^3 + \dots + a_r k_r^3
\]
for units $a_i$, and clearly $f-g \in K^3$. Since $K_\fm = (J_f)_\fm$, 
their completions are equal in $\mathbb C [[x_1, \dots, x_n]]$. 
Therefore $f-g \in J_f^3 \subset \fm J_f^2$, and it follows from Lemma~\ref{Ruiz} that
$\widehat \O_{X,p} = \C[[x_1, \dots, x_n]]/(g) \cong \C[[x_1, \dots, x_n]]/(f) = B$.
\end{proof}

\begin{ex}{\em For an {\em isolated} singularity, one can give the following proof based on the Mather-Yau theorem~\cite{MY}. 
The ideal $I=(f, f_{x_1}, \dots, f_{x_n})$ generated by $f$ and its partial derivatives defines a $0$-dimensional 
scheme $Y$ supported at the origin, hence $(x_1, \dots, x_n)^N \subset I$ for some $N > 0$. The scheme $Z$ defined by 
$(f, x_1^{N+2}, \dots, x_n^{N+2})$ is also supported at the origin $p$, so by Theorem \ref{gens} the very general 
surface $S$ containing $Z$ satisfies $\Cl \O_{S,p} = 0$. The local equation of $S$ has the form 
$g = f+\sum a_i x_i^{N+2}$ for units $a_i$ in the local ring $\O_{\P^n,p} \cong \C[x_1, \dots, x_n]_{(x_1, \dots, x_n)}$. 
Observe that $J = (g,g_{x_1}, \dots, g_{x_n}) \subset I$ because $g-f$ and its partials lie in 
$(x_1, \dots x_n)^{N+1} \subset I$. These ideals are equal because the induced map $J \to I/(x_1, \dots x_n)I$ 
is obviously surjective, therefore so is the map $J \to I$ by Nakayama's lemma. It follows that $I = J$ in the 
ring $\C\{x_1, \dots, x_n\}$ of germs of holomorphic functions as well, so $f$ and $g$ define 
(complex-) analytically isomorphic singularities; this isomorphism lifts to a formal-analytic isomorphism.
We have thus produced a UFD, namely $\O_{S,p}$, whose completion is isomorphic to $A$.
\em}\end{ex}

\section{Local Class Groups of Rational Double Points}

As noted in the introduction, Mohan Kumar showed that for $\rdA_n$ and $\rdE_n$ double points 
on a {\em rational} surface, the analytic isomorphism class determines the algebraic isomorphism 
class of the local ring, with the three exceptions $\rdA_7, \rdA_8, \rdE_8$ for which there are 
two possibilities each. Regarding the question of Srinivas mentioned in the introduction, this means that there 
is one (or sometimes two) possibilities for the inclusion $\Cl A \hookrightarrow \Cl \widehat A$ for the 
corresponding local rings. Our main goal in this section is to prove Theorem \ref{ratdouble} 
from the introduction, which says that without the rationality hypothesis, {\em all} subgroups arise in this way. 
Note that the case of the trivial class group is Theorem~\ref{ufd}, so we only need prove the result for nontrivial class groups.

We will prove this for each singularity type $\rdA_n, \rdD_n, \rdE_n$ separately, noting their standard equations 
as we will use, the class group $\Cl \widehat A$ of the completion of a local ring $A$ having the given type, 
and the Dynkin diagram of exceptional $(-2)$-curves for the minimal resolution (these have been known for a 
long time; {\em cf.} \cite{artin,lipman}). The class of any curve is determined by its intersection numbers with 
the exceptional curves $E_j$; denote by $e_j$ the class-group element corresponding to a curve 
meeting $e_j$ once and no other $e_i$.

The approach in the interesting cases is to find a base locus that forces both the desired singularity type and a curve, guaranteed by Theorem~\ref{gens} to generate the class group, whose strict transform has the desired intersection properties with the exceptional locus.

\subsection{$\rdA_n$ singularities} The $\rdA_n$ singularity for $n\ge 1$ is analytically isomorphic to that given by the equation 
$xy - z^{n+1}$ at the origin; its class group is $\Z/(n+1)\Z$, and the Dynkin 
diagram is 
$$\entrymodifiers={!! <0pt, .6ex>+}\xymatrix{
\underset{E_1}\circ\ar@{-}[r] & \underset{E_2}\circ\ar@{-}[r] & \dots & \underset{E_n}{\circ}\ar@{-}[l] }
$$
Here $e_1$ is one generator, and $e_j = je_1$ in the local class group.

The following, which immediately implies Theorem~\ref{ratdouble} for $\rdA_n$ singularities, is Prop. 4.1 from ~\cite{picgps}; there we used it to calculate the class groups of general surfaces containing base loci consisting of certain multiplicity structures on smooth curves.

\begin{prop}\label{step1}
Let $Z \subset \Pthree_\C$ be the subscheme with ideal 
$\I_{Z} = (x^2,xy,x z^q - y^{m-1}, y^m)$ for $m \geq 3$. 
Then the very general surface $S$ containing $Z$ has an $\rdA_{(m-1)q-1}$ singularity at 
$p=(0,0,0,1)$ and $\Cl(\O_{S,p}) \cong \Z / (m-1) \Z$ is generated by $C$.    
\end{prop}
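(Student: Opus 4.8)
The plan is to treat the three assertions---the $\rdA_{(m-1)q-1}$ singularity type, the cyclic class group $\Z/(m-1)\Z$, and its generation by $C$---in turn, reserving the precise order computation for last. First I would read off the generator. The ideal $\I_Z = (x^2, xy, xz^q - y^{m-1}, y^m)$ has radical $(x,y)$, so $Z$ is a Cohen--Macaulay multiplicity structure of degree $m$ supported on the line $L = \{x=y=0\}$, its unique codimension-two component, and it passes through $p$. After checking that $Z$ is superficial---its only point of embedding dimension $3$ is $p$ itself (codimension $3$), and the $(x,y)$-primary component is not contained in $(x,y)^2$, as the generator $xz^q - y^{m-1}$ witnesses---Theorem~\ref{gens} applies directly and shows $\Cl \O_{S,p}$ is cyclic, generated by $C=L$. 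This disposes of the generation statement and reduces the proposition to computing the order of $[L]$.

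Next I would pin down the singularity type. Writing the local equation of a very general $S \supset Z$ as a general $\O_{\Pthree,p}$-combination $g = a x^2 + b xy + c(xz^q - y^{m-1}) + d y^m$ with $a,b,c,d$ units, I would complete the square in $x$ (legitimate since $a$ is a unit) to obtain $g = (\mathrm{unit})\,u^2 + h(y,z)$ and then analyze the plane-curve germ $h$. After the substitution $v = by + cz^q$, the Newton polygon of $h$ has its two relevant vertices at $v^2$ and $z^{(m-1)q}$; for $m>3$ every other monomial lies strictly above the corresponding edge, while for $m=3$ one checks that the leading binary form in $(v, z^q)$ has nonzero discriminant for general coefficients. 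In all cases $h \sim v^2 + z^{(m-1)q}$, so $g$ defines an $\rdA_{(m-1)q-1}$ singularity and $\Cl \widehat{\O_{S,p}} \cong \Z/(m-1)q\,\Z$.

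It remains to locate $[L]$ inside $\Z/(m-1)q\,\Z$. The easy half is an upper bound on its order: restricting $x$ to $S$ gives $g|_{x=0} = y^{m-1}(dy-c)$ with $dy-c$ a unit at $p$, so $\mathrm{div}_S(x) = (m-1)[L]$ near $p$; hence $(m-1)[L]=0$ and the order divides $m-1$. The hard part---and the main obstacle---is showing the order is exactly $m-1$, equivalently that $[L]$ is $q$ times a generator of $\Z/(m-1)q\,\Z$. Here the leading-order normal form is unhelpful, since the cancellation that places $L$ on $S$ occurs only beyond leading order, and $L$ does not lie on the model surface $\xi\eta = \zeta^{(m-1)q}$. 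I would instead pass to the cyclic cover $\C^2 \to \rdA_{(m-1)q-1}$ given by $\xi = a^{(m-1)q},\ \eta = b^{(m-1)q},\ \zeta = ab$, and compute the proper transform of $L$: its contact $\eta \sim \zeta^q$ with the ruling forces the preimage to be cut by the semi-invariant $a^q - b^{(m-2)q}$, which has weight $\equiv q$ under $\mu_{(m-1)q}$. Thus $[L]$ corresponds to a unit multiple of $q$ in $\Z/(m-1)q\,\Z$, whose order is exactly $(m-1)q/\gcd((m-1)q,q) = m-1$.

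Combining this with the first step yields $\Cl \O_{S,p} = \cyc{[C]} \cong \Z/(m-1)\Z$, as claimed. I expect the delicate points to be the precise contact order of the honest line $L$ with the singularity---the source of the factor $q$ that is present formally but collapses algebraically, which is exactly the Srinivas phenomenon at play---and the verification that the semi-invariant computation computes the class of $L$ itself rather than of a multiple of it.
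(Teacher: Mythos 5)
First, a point of comparison: this paper contains no proof of Proposition~\ref{step1} at all --- it is imported verbatim from \cite[Prop.~4.1]{picgps}, and the proof there carries out precisely the kind of explicit normal-form computation that your sketch postpones. With that said, your first three steps are correct and can be checked: $Z$ is a degree-$m$ Cohen--Macaulay structure on $L$ and is superficial, so Theorem~\ref{gens} makes $\Cl \O_{S,p}$ cyclic, generated by $[L]$; the Newton-polygon analysis of $h$ is right, including your caveat at $m=3$ (the leading form $\alpha v^2+\beta v z^q + \gamma z^{2q}$ has discriminant $-c^3/(ab^2)\neq 0$, so it passes), giving the $\rdA_{(m-1)q-1}$ point and $\Cl \widehat\O_{S,p}\cong \Z/(m-1)q\Z$; and $g|_{x=0}=y^{m-1}(dy-c)$ with $dy-c$ a unit gives $\mathrm{div}_S(x)=(m-1)L$ near $p$, hence $(m-1)[L]=0$. (A presentational quibble: since $a,b,c,d$ are power series rather than constants, ``completing the square in $x$'' should be done via Weierstrass preparation, but this is routine.)

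The genuine gap is the last step, which is the actual content of the proposition. The statement that along $L$ one has contact $\eta\sim\zeta^{q}$ with a ruling --- equivalently that the strict transform of $L$ meets $E_q$, equivalently that the preimage of $L$ in the $\mu_{(m-1)q}$-cover is cut by a semi-invariant of weight $q$ --- is asserted, not derived, and you yourself defer it as ``the delicate point.'' It cannot be waved through: writing $\xi,\eta=\sqrt{a}\,u\pm i\sqrt{\epsilon}\,w$ for the two completed squares (with $\epsilon$ the Weierstrass unit of $h$), the restrictions $u|_L$ and $w|_L$ both have order exactly $q$ in $z$, so at leading order \emph{both} $\xi|_L$ and $\eta|_L$ look like a unit times $z^q$, which for $m>3$ is inconsistent with $\xi\eta=\zeta^{(m-1)q}$ on $L$. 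What actually happens is that $\epsilon(0)=-1/(4a(0))$ exactly, so the order-$q$ terms cancel identically in precisely one of the two conjugate combinations, leaving contact orders $q$ and $(m-2)q$; proving this forces one to track the ideal $(x,y)$ through every normalizing change of coordinates (Weierstrass in $x$, Weierstrass and completion of the square in $v$, absorption of the residual unit into $\zeta$), and nothing in your proposal does this. Note also that the two facts you do establish cannot be parlayed into the conclusion: $[L]\neq 0$ together with $(m-1)[L]=0$ is satisfied, for instance, by $[L]=2q$ when $m-1$ is even, which has order $(m-1)/2$. Two lesser remarks: the semi-invariant is not literally $a^q-b^{(m-2)q}$ (the preimage consists of $q$ smooth branches and only the weight of its equation matters); and once the contact orders are known you do not need the cyclic cover at all, since $\mathrm{div}_S\bigl(\eta-e(\zeta)\zeta^{q}\bigr)=L+qD$ with $D=\{\eta=\zeta=0\}$ a generator of $\Cl\widehat\O_{S,p}$, so $[L]=-q[D]$ has order $m-1$ directly.
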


\subsection{$\rdD_n$ singularities}
The $\rdD_n$ singularity for $n\ge 4$ is analytically isomorphic to that given by $x^2+y^2 z + z^{n-1}$ at the origin. The class group of the completion is $\Z/4\Z$ if $n$ is even and $\Z/2\Z \oplus \Z/2\Z$ if $n$ odd, and the Dynkin diagram is
$$\entrymodifiers={!! <0pt, .6ex>+}\xymatrix{ & & &  \overset{E_{n-1}}{\circ}\ar@{-}[d] & \\
\underset{E_1}{\circ} \ar@{-}[r] & \underset{E_2}\circ\ar@{-}[r] & \dots & \underset{E_{n-2}}{\circ}\ar@{-}[l] \ar@{-}[r]  & \underset{E_n}{\circ} 
}
$$
Here, $e_j$ for $j<n-1$ is $je_1$ as in the $\rdA_n$ case. If $n$ is even, $E_{n-1}$ and $E_n$ are the two distinct generators of $\Cl \hat A$, and $e_1$ corresponds to the element $2$; if $n$ is odd, then $e_1, e_{n-1}$, and $e_n$ are the three distinct nonzero elements.

We will need the following lemmas.

\begin{lem}\label{comproots} \cite[Lemma 2.2]{picgps} Let $(R,\fm)$ be a complete local domain, and let $n$ be a positive 
integer that is a unit in $R$. If $a_0 \in R$ is a unit and $u \equiv a_0^n$ mod $\fm^k$ 
for some fixed $k > 0$, then there exists $a \in R$ such that $a^n = u$ and $a \equiv a_0$ mod $\fm^k$.
\end{lem}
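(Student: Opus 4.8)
The plan is to recognize Lemma~\ref{comproots} as a Hensel-type statement for the polynomial $T^n - u$ and to prove it by successive approximation (Newton's iteration), exploiting that the formal derivative $nT^{n-1}$ is a unit at $a_0$. Since $R$ is $\fm$-adically complete, it will suffice to build a Cauchy sequence $a_0, a_1, a_2, \dots$ of units whose $n$-th powers approximate $u$ to ever higher order; its limit $a$ will then satisfy $a^n = u$, and arranging that each successive correction lies in a high power of $\fm$ will automatically keep $a \equiv a_0 \pmod{\fm^k}$.

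Concretely, I would construct the $a_j$ inductively so that $a_j \equiv a_0 \pmod{\fm^k}$ and $a_j^n \equiv u \pmod{\fm^{k+j}}$, the base case $j=0$ being exactly the hypothesis. Given $a_j$, I look for a correction $\delta \in \fm^{k+j}$ and set $a_{j+1} = a_j + \delta$. Expanding by the binomial theorem,
$$(a_j + \delta)^n = a_j^n + n a_j^{n-1}\delta + \sum_{i \ge 2}\binom{n}{i} a_j^{n-i}\delta^i,$$
and since $\delta \in \fm^{k+j}$ with $k+j \ge 1$, every term with $i \ge 2$ lies in $\fm^{2(k+j)} \subset \fm^{k+j+1}$. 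Thus modulo $\fm^{k+j+1}$ it remains only to solve the linear congruence $n a_j^{n-1}\delta \equiv u - a_j^n$. Because $a_0$ is a unit and $a_j \equiv a_0 \pmod{\fm}$, each $a_j$ is a unit, and $n$ is a unit by hypothesis, so $n a_j^{n-1}$ is invertible; setting $\delta = (n a_j^{n-1})^{-1}(u - a_j^n)$ and noting $u - a_j^n \in \fm^{k+j}$ gives $\delta \in \fm^{k+j}$ as required, with $a_{j+1}^n \equiv u \pmod{\fm^{k+j+1}}$. Moreover $a_{j+1} - a_j = \delta \in \fm^{k+j} \subset \fm^k$, so the congruence $a_{j+1} \equiv a_0 \pmod{\fm^k}$ is preserved.

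Finally, the differences $a_{j+1} - a_j \in \fm^{k+j}$ tend to $0$, so $(a_j)$ is $\fm$-adically Cauchy and converges to some $a \in R$ by completeness. Passing to the limit, $a \equiv a_0 \pmod{\fm^k}$, while $a^n \equiv a_j^n \equiv u \pmod{\fm^{k+j}}$ for every $j$; since $\bigcap_j \fm^j = 0$ for the (noetherian) complete local ring $R$, this forces $a^n = u$ exactly.

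As for difficulty, there is no deep obstacle: the result is elementary once the iteration is in place. The only points that will need care are bookkeeping the ideal memberships, namely verifying that the quadratic-and-higher terms of the binomial expansion fall into $\fm^{k+j+1}$ (which relies on $k \ge 1$, so that $2(k+j) \ge k+j+1$) and confirming that $n a_j^{n-1}$ is a unit at every stage so that the equation for $\delta$ is solvable. I would also make explicit the use of completeness together with separatedness $\bigcap_j \fm^j = 0$ to pass from approximate to exact equality $a^n = u$.
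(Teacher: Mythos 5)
Your proof is correct. Note, however, that the paper does not prove this lemma at all: it is quoted with a citation to \cite[Lemma 2.2]{picgps}, so there is no internal proof to compare against. Your argument is the standard Hensel--Newton successive-approximation proof for the polynomial $T^n - u$, whose derivative $n a_0^{n-1}$ is a unit by the hypotheses that $n$ and $a_0$ are units, and this is precisely the kind of argument the cited lemma rests on; all the delicate points (the quadratic terms landing in $\fm^{k+j+1}$ because $k \geq 1$, each $a_j$ remaining a unit, and separatedness $\bigcap_j \fm^j = 0$ giving exact equality in the limit) are handled correctly in your write-up.
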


\begin{lem}\label{dnen}
Let $R = k[[y,z]]$ with maximal ideal $\fm \subset R$. 
For integers $a,s,t$ with $s > a > 1, t > a+1$ and $b \in R$ a unit, 
there is a change of coordinates $Y,Z$ such that 
\[
f=y^{a} z + z^{s} - b y^{t} = Y^{a} Z + Z^{s}.
\]
Furthermore $X,Y$ may be chosen so that $y \equiv Y$ mod $\fm^2$ and $z \equiv Z$ mod $\fm^2$.
\end{lem}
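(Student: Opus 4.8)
The plan is to realize the required change of coordinates as a formal automorphism of $R=k[[y,z]]$ that is tangent to the identity and carries the perturbed equation onto the quasi-homogeneous model. Set $f_0=y^az+z^s$, so that $f=f_0-by^t$, and assign the weights $\mathrm{wt}(y)=s-1$, $\mathrm{wt}(z)=a$. Then $f_0$ is quasi-homogeneous of weighted degree $as$, while $by^t$ has weighted degree $t(s-1)$; the hypotheses $s>a>1$ and $t>a+1$ give $t(s-1)>as$ by a short computation, so the perturbation has \emph{strictly larger} weighted degree than the model. This is the structural reason one expects to absorb it by a coordinate change close to the identity, and it forces $Y$ and $Z$ to differ from $y$ and $z$ only by terms of positive weight, in particular by terms in $\fm^2$, which yields the asserted congruences $Y\equiv y$, $Z\equiv z \bmod \fm^2$.

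I would build $Y,Z$ by weighted successive approximation. Writing $Y=y+P$, $Z=z+Q$ and expanding $Y^aZ+Z^s$, the equation $Y^aZ+Z^s=f$ becomes, to first order,
\[
f_{0,y}\,P+f_{0,z}\,Q=-by^t+(\text{higher weighted order}),
\]
so one must exhibit the leading perturbation inside the Jacobian ideal $J_{f_0}=(a y^{a-1}z,\ y^a+sz^{s-1})$. The governing identity is $y^t\in J_{f_0}$: from $y^{t-a}(y^a+sz^{s-1})=y^t+sy^{t-a}z^{s-1}$ one clears the second term with the generator $ay^{a-1}z$ and solves for $P,Q$ of the correct weight. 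In the case $a=2$ that we actually use this is explicit: with $\beta=b(0,0)$ one takes
\[
Z=z-\beta y^{t-2}+\cdots,\qquad Y=y+\tfrac{s\beta}{2}\,y^{t-3}z^{s-2}+\cdots,
\]
and both corrections lie in $\fm^2$ since $t\ge 4$ and $s\ge 3$. Substituting this first approximation into $f$ produces a new error of strictly higher weighted degree, and one repeats.

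To see that the iteration converges I would use that $f_0$ has an isolated singularity (its only critical point is the origin), so $R/J_{f_0}$ is Artinian and $\fm^N\subseteq \fm\,J_{f_0}$ for some $N$; once the accumulated error has weighted order past this range it is absorbed outright, and the successive corrections tend to $0$ in the $\fm$-adic topology. Their sums define honest power series $Y,Z$ for which $Y^aZ+Z^s$ and $f$ agree up to a unit multiple of the two monomials. Finally I would normalize the leading coefficient to $1$: writing the outcome as $u\,Y^aZ+Z^s$ with $u$ a unit congruent to $1\bmod\fm$, Lemma~\ref{comproots} applied with $n=a$ (a unit in $R$ over $\C$) supplies a root $u^{1/a}\equiv 1\bmod\fm$, and replacing $Y$ by $u^{1/a}Y$ restores the exact form $Y^aZ+Z^s$ while keeping $Y\equiv y\bmod\fm^2$.

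The hard part is the resonance at the linear step. Because $f_{0,z}=y^a+sz^{s-1}$ lies in $\fm$ rather than being a unit, the naive Newton substitution $Z=z-by^t/f_{0,z}$ is illegal in $R$, and fixing $Y=y$ cannot succeed: the lowest-weight part of the equation then reads $(y^a+sz^{s-1})Q_0=-\beta y^t$, which has no solution in $R$. One is therefore forced to move both coordinates, and the entire argument turns on the ideal membership $-by^t\in J_{f_0}$, which is precisely what the numerical hypotheses $s>a>1$ and $t>a+1$ are arranged to guarantee. Verifying this membership, and checking in the iteration that each successive error remains absorbable, is the step I expect to demand the most care.
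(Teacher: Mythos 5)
Your route is genuinely different from the paper's: you invoke the standard singularity-theory machinery (quasi-homogeneous weights, solvability in the Jacobian ideal, Newton iteration, finite determinacy), whereas the paper gives a self-contained explicit iteration in which the error term is at every stage a unit times a \emph{pure power of} $y$: it absorbs $-b_iy_i^{k_i}$ by $z_{i+1}=z_i-b_iy_i^{k_i-a}$, refactors, and restores the coefficient of $y^az$ to $1$ by rescaling $y$ by an $a$-th root of a unit via Lemma~\ref{comproots}, with the new exponent $k_{i+1}=s(k_i-a)$ growing fast enough to force $\fm$-adic convergence. However, your proposal has a genuine gap, and it sits exactly where you predicted: the ``governing identity'' $y^t\in J_{f_0}$ is \emph{not} a consequence of the hypotheses $s>a>1$, $t>a+1$. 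Your own verification needs $y^{t-a}z^{s-1}$ to be divisible by $y^{a-1}z$, i.e. $t\ge 2a-1$, which follows from $t\ge a+2$ only when $a\le 3$. In fact the membership holds \emph{iff} $t\ge 2a-1$: in $R/J_{f_0}$ one has $y^{a-1}z=0$, $y^a=-sz^{s-1}$, $z^s=0$, with monomial basis $\{y^iz^j: i\le a-2,\ j\le s-1\}\cup\{y^{a-1}\}$, so $y^t\equiv -s\,y^{t-a}z^{s-1}$, a nonzero basis element whenever $a+2\le t\le 2a-2$. Concretely, for $a=4$, $s=5$, $t=6$ one gets $y^6\equiv -5y^2z^4\neq 0$ in $R/J_{f_0}$, and your linear step has no solution.

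This is not a repairable technicality, because the lemma as stated actually fails in that range: for $f=y^4z+z^5-y^6$, Euler's relation gives $y^6=5f-yf_y-zf_z$, and a short computation shows $R/(f,f_y,f_z)$ is spanned by $15$ monomials, whereas $\tau(f_0)=\mu(f_0)=(a-1)s+1=16$; since the Tjurina number is invariant under formal changes of coordinates, no $Y,Z$ with $f=Y^4Z+Z^5$ can exist. This also shows your opening heuristic --- that perturbations of weighted degree larger than $as$ can always be absorbed --- is false in general: the normal-form theorem for semi-quasihomogeneous functions leaves residual terms at basis monomials of the Milnor algebra of weighted degree above $as$, and $y^2z^4$ is exactly such a monomial, so your convergence claim (``each successive error remains absorbable'') is not just delicate but breaks down. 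To be fair, the same restriction is hidden in the paper's own proof: the unit $v_i$ contains the factor $y_i^{k_i-2a}$, so the first step needs $t\ge 2a$. The paper, however, only ever applies the lemma with $a=2$ (the $\rdD_n$ equations), where $t>a+1$ forces $t\ge 4=2a$; in that regime your membership claim holds, the Milnor algebra of $y^2z+z^s$ has no basis monomials above weighted degree $2s$, and your scheme (like the paper's) goes through. In short: your plan is sound precisely where the lemma is used, but the step you flagged as needing the most care is false under the stated numerical hypotheses, so as written the proof cannot be completed --- indeed no proof can, without strengthening $t>a+1$ to something like $t\ge 2a-1$.
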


\begin{proof}
We produce coordinate changes $y_i, z_i$ such that 
$y_{i+1}\equiv y_i \mod \fm^{i+1}, z_{i+1}\equiv z_i \mod \fm^i$ 
and $f = y_i^a z_{i} + z_i^s -b_i y_{i}^{k_i}$ with 
$k_i\ge i+a+1$ and $b_{i}$ a unit. By hypothesis $y_1=y, z_1=z$ give the base step $i=1$.

For the induction step, let $z_{i+1} = z_i - b_i y^{k_i-a} \equiv z_i\mod \m^{i+1}$ so that
$$f=y_i^a z_{i+1} + (z_{i+1}^s + sb_iz_{i+1}^{s-1}y_i^{k_i-a} + 
\dots + sb_i^{s-1}z_{i+1}y_i^{(s-1)(k_i-a)} + y_i^{s(k_i-a)})$$
$$=y_i^a z_{i+1}\ubm{[1+sb_iz_{i+1}^{s-2}y_i^{k_i-2a} + \dots + 
sb_i^{s-1}y_i^{(s-1)(k_i-2a)}]}{v_{i}} + 
z_{i+1}^s + b_i^s y_i^{s(k_i-a)}$$
where $v_i$ is a unit with lowest-degree term after the leading $1$ is of degree 
$s-2+k_i-2a \ge i$. By Lemma~\ref{comproots}, $v_i$ has an $a^{\rm th}$ root 
$w_i$ that is congruent to $1$ mod $\fm^i$. 
Then $y_{i+1} = w_i y_i \equiv y_i \mod \fm^{i+1}$, so that 
$f = y_{i+1}^a z_{i+1} + z_{i+1}^s - b_{i+1} y_{i+1}^{k_{i+1}}$, 
where $b_{i+1} = -b_i^s w_i^{-s(k_i-a)}$ is a unit and 
$k_{i+1} = s(k_i-a) \ge s(i+1) \ge (a+1)(i+1) \ge a+i+2$, completing the induction.
\end{proof}

\begin{prop} Theorem~\ref{ratdouble} holds for $\rdD_n$ singularities.
\end{prop}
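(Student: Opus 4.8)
The plan is to mimic the $\rdA_n$ construction of Proposition~\ref{step1} but, as the introduction warns, to build the base locus $Y$ from a \emph{line with an embedded point at $p$} rather than from a Cohen--Macaulay multiplicity structure on a smooth curve. First I would write down the standard local equation $f_0 = x^2 + y^2z + z^{n-1}$ and identify what the class group $\Cl \hat A$ is ($\Z/4\Z$ for $n$ even, $\Z/2\Z \oplus \Z/2\Z$ for $n$ odd) together with the dictionary between generators and the curve classes $e_j$ recorded above. The goal in each subgroup case is to produce a base locus $Y \subset \Athree_\C$ (then take its projective closure $\overline Y \subset \Pthree$) whose codimension-two components passing through $p$ are exactly the curves whose images in $\Cl \O_{S,p}$ generate the prescribed subgroup $H$, so that Theorem~\ref{gens} identifies $\Cl \O_{S,p}$ with $H$ on the nose.

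The key steps I would carry out are as follows. \emph{Step 1:} Choose the base locus $Y$ so that the very general surface $S \supset Y$ has local equation $g = f_0 + (\text{higher-order correction})$ at $p$, and use Lemma~\ref{Ruiz} (the Ruiz finite-determinacy lemma) to verify $\widehat\O_{S,p} \cong B$, i.e.\ that the correction lies in $\fm J_{f_0}^2$; this is exactly how Theorem~\ref{ufd} was proved, so I would reuse that mechanism. \emph{Step 2:} For the chosen $Y$, apply Theorem~\ref{gens} to read off that $\Cl\O_{S,p}$ is freely generated (as a subgroup of $\Cl\hat A$) by the supports of the codimension-two components of $Y$ through $p$. \emph{Step 3:} Compute the intersection numbers of the strict transform of each such support curve $C$ with the exceptional $(-2)$-curves $E_j$ of the minimal resolution, thereby pinning down the class $[C] \in \Cl\hat A$ via the $e_j$ dictionary; the normal form produced by Lemma~\ref{dnen}, $f = Y^aZ + Z^s$, is precisely what makes this computation tractable by putting the singularity into a shape where the branches of $C$ along the resolution are transparent. \emph{Step 4:} Enumerate the subgroups of $\Cl\hat A$ and, for each, exhibit a specific $Y$ realizing it --- here one must treat the trivial subgroup (already Theorem~\ref{ufd}), the cyclic subgroups of order $2$, and the full group, and in the even case handle the three distinct order-two subgroups, proving the sharper statement that the two \emph{inequivalent} such subgroups (whose generators are \emph{not} conjugate in the Dynkin diagram, per~\cite{J2}) each occur.

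The hard part, I expect, will be Step 3 together with the delicate subgroup bookkeeping of Step 4. Realizing an embedded-point structure at $p$ whose strict transform hits the \emph{correct} terminal node $E_{n-1}$ or $E_n$ of the $\rdD_n$ diagram --- rather than $e_1$ --- is where the line-plus-embedded-point construction is forced, since a smooth curve through the singularity tends to realize only the class $e_1$ ($=$ the element $2$ in the even case), and one genuinely needs a tangentially degenerate curve to reach the spin-type generators. Controlling which of the two fork tips the strict transform meets, and verifying that the two choices give the two automorphism-inequivalent subgroups, is the crux and will require the explicit coordinate change of Lemma~\ref{dnen} to separate the branches; everything else (normality of $S$, applicability of Theorem~\ref{gens}, the Ruiz reduction) is by now routine given the earlier results.
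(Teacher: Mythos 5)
Your outline tracks the paper's strategy closely---a base locus forcing both the singularity type and a generating curve, Theorem~\ref{gens} to read off generators, blow-ups to identify classes in $\Cl \widehat\O_{S,p}$, then subgroup bookkeeping including the sharper statement for even $n$---but it stops exactly where the proof has to begin. Everything you defer to Steps 3--4 (and yourself call ``the crux'') is the actual content. The paper exhibits $I_Z=(x^2,\,y^2z,\,z^{n-1},\,xy^n)$, a line with an embedded point, and shows the very general $S\supset Z$ has a $\rdD_n$ point at which $\Cl\O_{S,p}=\cyc{e_1}\cong \Z/2\Z$, by blowing up and tracking the strict transform of the line into $E_1$; for $n=2r$ even it exhibits $I_Z=(x^2,\,y^2z-z^{2r-1},\,y^5-z^{5r-5})$, a double structure on the smooth curve $(x,\,y-z^{r-1})$ plus an embedded point, whose strict transform is shown to miss $E_1$ on the second blow-up and hence to meet $E_{n-1}$ or $E_n$, producing the inequivalent order-two subgroup; and for the full class group it uses no base locus at all, but the explicit surface $x^2+y^2z-z^{n-1}=0$, which carries curves realizing both generators: $(x,z)$ together with $(x,\,y-z^{(n-2)/2})$ for $n$ even, or $(x-z^r,\,y)$ for $n=2r+1$, the latter identified by an induction on blow-ups reducing to the case $n=5$. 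A proposal that says ``for each subgroup, exhibit a specific $Y$'' without producing these ideals and carrying out these resolutions has not engaged the part of the theorem that requires proof.

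Two of the specific commitments you do make would also fail as stated. First, Lemma~\ref{Ruiz} cannot be applied directly in your Step 1: the very general surface through $Y$ has local equation $g=ax^2+by^2z+dz^{n-1}+cxy^n$ with very general unit coefficients, so $g-f_0$ contains terms such as $(a-1)x^2$ of order two, nowhere near $\fm J_{f_0}^2$. The paper first normalizes units by extracting roots in the completion (Lemma~\ref{comproots}) and completing the square, and then identifies the singularity via the explicit coordinate changes of Lemma~\ref{dnen}, not via Ruiz; if you insist on Ruiz after normalization, you still owe the non-obvious verification that $xy^n$, resp.\ $y^5$, lies in $\fm J_{f_0}^2$. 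Second, your heuristic that a smooth curve ``tends to realize only the class $e_1$'' and that a ``tangentially degenerate'' curve is needed for the fork-tip classes is false, and it inverts the paper's constructions: the tip classes are realized by curves smooth at $p$, namely $(x,\,y-z^{r-1})$ and $(x-z^r,\,y)$, the embedded points in the base loci serve to force the singularity of $S$ at $p$ rather than to make the generating curve singular, and Proposition~\ref{smoothrep} shows that every class of every rational double point has a representative smooth at $p$. What distinguishes $e_1$ from $e_{n-1},e_n$ is the tangent direction and contact order of the curve with respect to the singularity, which is precisely the computation your proposal leaves undone. (Minor: Theorem~\ref{gens} gives generation, not free generation---the local class groups here are torsion.)
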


\begin{proof}
The class group of a complete $\rdD_n$ singularity is either $\Z / 4 \Z$ 
or $\Z / 2 \Z \oplus \Z / 2 \Z$; we first produce the subgroups of order two as class groups of local rings of surfaces.  The scheme $Z$ defined by $I_Z = (x^2,y^2z, z^{n-1}, xy^n)$ consists of the line $L: x=z=0$ 
and an embedded point at the origin $p$, hence the very general surface $S$ containing $Z$ 
has local Picard group $\Cl(\O_{S,p})$ generated by $L$ by Theorem \ref{gens} 
and $L \neq 0$ in this group because $L$ is not Cartier at $p$, for $L$ is smooth at $p$ while $S$ is not. 
The local equation of $S$ has the form $ax^2 + by^2z + z^{n-1} + cxy^n$ with units 
$a, b, c \in \O_{\Pthree,p}$. After we use Lemma 4.2 to take square roots of $a,b$ in $\widehat \O_{\Pthree,p}$ the equation becomes 
\[ 
x^2 + y^2z + z^{n-1} + cxy^n=(\ubm{x+\frac{c}{2}y^n}{x_1})^2 + y^2z + z^{n-1} - \frac{c^2}{4} y^{2n} 
\] 
and applying the coordinate change of Lemma \ref{dnen} exhibits the $\rdD_n$ singularity.

To determine the class of $L$, we blow up $S$ at $p$. The local equation of $\widetilde S$ on the 
patch $Y=1$ is $aX^2 + byZ + y^{n-3}Z^{n-1} + cXy^{n-1}$ and the strict transform $\widetilde L$ 
has ideal $(X,Z)$, hence $\widetilde L$ meets the exceptional curve at the new origin of this patch, 
which is the $\rdA_1$ singularity whose blow-up will produce the exceptional divisor $E_1$; 
Resolving this $\rdA_1$ shows that $\widetilde L$ meets $E_1$ but not $E_2$, so $L$ gives the class 
$u_1$ defined above. In particular, $2L = 0$ and $\Cl(\O_{S,p}) \cong \Z / 2 \Z$. 

When $n$ is odd, $\cyc{u_1}$ is the only subgroup of order $2$ in $\Cl(\O_{S,p})$ and we are 
finished; when $n$ is even -- at least when $n \geq 6$ -- there are three such subgroups 
$\cyc{u_1}, \cyc{u_{n-1}}$ and $\cyc{u_n}$, the last two being distinguishable from the first, 
but not from one another, as they correspond to the two exceptional curves in the final blow-up. 
We therefore construct a $\rdD_n$ singularity, $n$ even, such that its local Picard group is 
generated by $u_{n-1}$ or $u_n$. 

Write $n=2r$ and define $Z$ by the ideal $I_Z = (x^2, y^2z-z^{2r-1}, y^5-z^{5r-5})$. 
The last two generators show that $y \neq 0 \iff z \neq 0$ along $Z$, when $I_Z$ is 
locally equal to $(x^2, y - z^{r-1})$, thus $Z$ consists of a double structure on the 
smooth curve $C$ with ideal $(x,y-z^{r-1})$ and an embedded point at the origin $p$. 
Therefore if $S$ is a very general surface containing $Z$, then as before $C$ generates 
$\Cl(\O_{S,p})$ and is nonzero. The local equation for $S$ has the form 
$ax^2 + y^2z-z^{2r-1} + by^5-bz^{5r-5}$ for units $a,b \in \O_{\Pthree,p}$. 
Passing to the completion and adjusting the variables by appropriate roots of units, the equation 
of $S$ becomes $x^2+y^2z+z^{2r-1}+y^5$. Changing variables via Lemma \ref{dnen}, the equation becomes 
$x^2 + Y^2 Z + Z^{2r-1}$ and we see the $\rdD_{2r}$-singularity. 

To determine the class of $C$ in $\Cl(\O_{S,p})$, we return to the original form of the 
equation for $S$ and blow up $p$. Following our usual conventions, look on the patch $Z=1$, 
where $S$ has equation $X^2 + aY^2z - az^{2r-3} + bY^5z^3 - bZ^{5r-7}$. As long as $2r-3 \ge 3$, 
that is, $r\ge 3$, we can take advantage of our knowledge of the original singularity, together 
with the fact that this expression is a square mod the third power of the maximal ideal at the origin, 
to conclude that it is this new surface has a $\rdD_{2r-2}$ singularity there; the strict transform 
$\tilde{C}$ of $C$ has ideal $(X,Y-z^{r-3})$ and so passes through the origin transversely to the 
exceptional curve $E_1$ given by $(X,z)$. Now, on the full resolution of singularities $\tilde{C}$ 
maps to a smooth curve and therefore meets exactly one admissible $E_i$, that is, either $E_1$, $E_{n-1}$, 
or $E_n$. By the transversality noted above, however, it does not meet $E_1$ in the next blowup. 
We conclude that it meets one of the other curves and so we have produced the desired subgroup.

Finally, we need an example for which $\Cl(\O_{S,p}) = \Cl(\widehat \O_{S,p})$. To this end consider 
the surface $S$ defined by $x^2 + y^2 z - z^{n-1}$, which has a $\rdD_n$ singularity at the origin 
(change coordinates $\displaystyle{z \mapsto z^\prime = e^\frac{\pi i}{n-1}z, y \mapsto y^\prime = e^{-\frac{\pi i}{2(n-1)}y}}$). As above, the curve with ideal $(x,z)$ corresponds to $u_1$, so it suffices 
in all cases to find a curve on this surface that gives the element $u_n$ (or $u_{n-1}$). 

For even values of $n$, the curve $\displaystyle{(x,y-z^\frac{n-2}{2})}$ gives the other generator by an argument 
analogous to, but much easier than, the one for $n$ even in the previous case considered.

For odd values of $n$, write $n=2r+1$, use the form $x^2+y^2z-z^{2r}$, and let $C$ be the curve having 
ideal $(x-z^r, y)$. Looking on the patch $Z=1$ of the blow-up gives the surface $X^2 + Y^2z -2z^{2r-2}$ 
and the curve $(X-z^{r-1}, Y)$, so by induction it suffices to prove that, for $n=5$, that is, $r=2$, 
the curve $C$ having ideal $(x-z^2, y)$ gives one of the classes $u_4, u_5$ in $\Cl(\O_{S,p})$, where $S$ 
has equation $x^2+y^2z-2z^4$ and $p$ is the origin. On the patch $Z=1$ of the blow-up $\tilde{S}$, 
the equation is $X^2+Y^2z-Z^2$, which has the $\rdA_3$ at the origin, and $\tilde{C}$ has ideal $(X-z, Y)$. 
A further blow-up, again  on the patch $Z=1$, gives equation $X^2+Y^2z-1$ for $\tilde{S}$ and ideal $(X-1, Y)$, 
for $\tilde{C}$. $C$ meets the exceptional locus only at the smooth point with coordinates $(1,0,0)$ relative 
to this patch, so in the full resolution of singularities, $\tilde{C}$ meets one of the two exceptional 
curves arising from the blow-up of the $\rdA_3$, which correspond to $E_4$ and $E_5$ in the Dynkin diagram.
\end{proof}

\subsection{$\rdE_6, \rdE_7, \rdE_8$ singularities} The standard equations for the analytic isomorphism types and class groups for these singularities are as follows:
\begin{center}
\begin{tabular}{|l|l|l|}
\hline
Type & Equation & $\Cl \hat A$ \\
\hline \hline
$\rdE_6$ & $x^2+y^3+z^4$ & $\Z/3\Z$ \\
\hline
$\rdE_7$ & $x^2+y^3+yz^3$ & $\Z/2\Z$ \\
\hline
$\rdE_8$ & $x^2+y^3+z^5$ & $0$ \\
\hline
\end{tabular}
\end{center}

and the $\rdE_n$ Dynkin diagram looks like
\[ \entrymodifiers={!! <0pt, .6ex>+}
\xymatrix{ & & &  \overset{E_{n-2}}{\circ}\ar@{-}[d] & \\
\underset{E_1}{\circ} \ar@{-}[r] & \underset{E_2}\circ\ar@{-}[r] &
\dots & \underset{E_{n-3}}{\circ}\ar@{-}[l] \ar@{-}[r]  & 
\underset{E_{n-1}}{\circ} \ar@{-}[r]  & \underset{E_n}{\circ}} 
\]
For our purposes, as it happens, the way that the exceptional curves correspond to class group elements is not important.

\begin{prop} Theorem~\ref{ratdouble} holds for $\rdE_6, \rdE_7$, and $\rdE_8$ singularities.
\end{prop}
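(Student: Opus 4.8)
The plan is to handle each of the three exceptional types $\rdE_6, \rdE_7, \rdE_8$ by the same strategy used for the $\rdA_n$ and $\rdD_n$ cases: exhibit an explicit base locus $Z \subset \Pthree_\C$ through the point $p$ so that the very general surface $S \supset Z$ acquires the correct singularity type at $p$, and so that Theorem~\ref{gens} lets us read off $\Cl \O_{S,p}$ from the codimension-two components of $Z$ passing through $p$. Since the completed class groups are $\Z/3\Z$, $\Z/2\Z$, and $0$ respectively, and Theorem~\ref{ufd} already disposes of the trivial subgroup, the only nontrivial subgroups to realize are all of $\Z/3\Z$ for $\rdE_6$ and all of $\Z/2\Z$ for $\rdE_7$; the group $\Z/3\Z$ has no proper nontrivial subgroups and $\rdE_8$ has trivial class group, so in each case there is essentially one nontrivial target. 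This makes the $\rdE$ cases combinatorially simpler than $\rdD_n$, where multiple subgroups of the same order forced the delicate conjugacy analysis.

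First, for each type I would write down a base scheme $Z$ whose support is a curve $C$ through $p$ together with whatever embedded or nonreduced structure is needed to force the general containing surface to have local equation of the standard form $x^2+y^3+z^4$, $x^2+y^3+yz^3$, or $x^2+y^3+z^5$ after a coordinate change. As in the earlier cases, the generators of $Z$ should be chosen so that the generic local equation of $S$ is a unit combination of the monomials appearing in the standard form plus higher-order terms; I would then pass to the completion $\widehat \O_{\Pthree,p}$ and clear the unit coefficients using Lemma~\ref{comproots} to extract appropriate roots of units, reducing the equation to the exact normal form and thereby certifying $\widehat \O_{S,p} \cong B$. Because the $\rdE$ normal forms are weighted-homogeneous and rigid, I expect the requisite coordinate changes to be cleaner than the inductive change-of-variables in Lemma~\ref{dnen}, though one may need an analogue of that lemma to absorb the higher-order error terms.

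Next, having fixed the singularity type, I would verify that $C$ is a generator of $\Cl \O_{S,p}$ and compute its order. By Theorem~\ref{gens}, $\Cl \O_{S,p}$ is generated by the codimension-two components of $Z$ through $p$; choosing $Z$ so that $C$ is the unique such component makes $C$ a single generator, and $C \neq 0$ follows by checking that $C$ is not Cartier at $p$ (for instance, $C$ smooth at $p$ while $S$ is singular). To pin down the actual class, I would blow up $S$ at $p$, track the strict transform $\widetilde C$ through the resolution, and record which exceptional $(-2)$-curve $E_i$ it meets; since (as the paper notes) the precise correspondence between the $E_i$ and class-group elements is immaterial here, it suffices to confirm that $\widetilde C$ meets exactly one exceptional curve transversally, so that $C$ maps to a nonzero element and hence generates the full cyclic group $\Z/3\Z$ or $\Z/2\Z$.

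The main obstacle I anticipate is designing the base loci so that the generic surface lands in the correct $\rdE$ normal form rather than a nearby singularity type: the $\rdE$ singularities are adjacent to one another and to the $\rdD$ and $\rdA$ types in the deformation hierarchy, so a careless choice of generators for $I_Z$ could produce, say, $\rdD$ instead of $\rdE$, or a milder singularity after blowing up. Controlling this requires checking that the chosen monomials genuinely force the Newton polygon (or the weighted tangent cone) of the generic equation to match the standard form, and that no unintended cancellation lowers the order of contact; for $\rdE_8$, whose class group is already trivial, the content is really just the normal-form realization, so the genuinely new verification is concentrated in the $\rdE_6$ and $\rdE_7$ cases where the single nontrivial generator must be shown to have the right order via the resolution computation.
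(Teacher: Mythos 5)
Your proposal is a strategy outline rather than a proof: for none of the three types do you actually exhibit a base locus $Z$, and the step you yourself flag as the main obstacle --- verifying that the very general surface containing $Z$ lands in the exact $\rdE$ normal form rather than an adjacent $\rdA$ or $\rdD$ singularity --- is never carried out. Every substantive step of your plan (the choice of generators for $I_Z$, the unit-clearing via Lemma~\ref{comproots}, the analogue of Lemma~\ref{dnen} for absorbing higher-order terms, the blow-up tracking of the strict transform) is deferred with ``I would'' or ``I expect,'' so there is nothing that can be checked. That is a genuine gap, not a stylistic one.

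The deeper issue is that you had the key observation in hand and did not use it: since $\Cl \widehat{\O}_{S,p}$ is $\Z/3\Z$ or $\Z/2\Z$, \emph{any} nonzero element of the image of the injection $\Cl \O_{S,p} \hookrightarrow \Cl \widehat{\O}_{S,p}$ generates the whole group, so Theorem~\ref{gens}, very general surfaces, normal-form reductions, and blow-ups are all unnecessary. This is exactly how the paper argues. For $\rdE_6$, take the explicit affine surface $S: x^2+y^3+z^4=0$, which visibly has the $\rdE_6$ singularity at the origin $p$, and the smooth curve $C$ with ideal $(x-iz^2,\,y)$, which lies on $S$ and passes through $p$. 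If $C$ were trivial in $\Cl \O_{S,p}$ it would be Cartier at $p$, which is impossible because $C$ is smooth at $p$ while $S$ is not; hence $C$ is a nonzero class and the image of $\Cl \O_{S,p}$ is all of $\Z/3\Z$. For $\rdE_7$, the same argument works with $x^2+y^3+yz^3=0$ and the $z$-axis $(x,y)$. For $\rdE_8$ the completed class group is $0$, so there is nothing to realize beyond the trivial subgroup, which Theorem~\ref{ufd} supplies in all cases. Your worry about Newton polygons and adjacency of singularity types simply evaporates, because no generic surface and no coordinate change are needed at all: the standard equations themselves serve as the surfaces.
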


\begin{proof}
We may assume $T \neq \rdE_8$ because in that case the class group is automatically $0$. If $T = \rdE_6$, 
we take the affine quartic surface $S$ with equation $x^2+y^3+z^4=0$, for which the $\rdE_6$ 
singularity at the origin $p$ is apparent. The smooth curve $C$ with ideal $(x-i z^2, y)$ lies 
on $S$ and passes through $p$. If $C$ restricts to $0$ in $\Cl \O_{S,p}$, then $C$ is Cartier on 
$S$ at $p$, impossible because $C$ is smooth at $p$ while $S$ is not: thus $C$ defines a non-zero 
element in $\Cl \O_{S,p}$, which must generate all of $\Cl\widehat \O_{S,p} \cong \Z / 3 \Z$, the only 
non-trivial subgroup. Similarly, for $T = \rdE_7$ we take the affine quartic $S$ defined by the 
equation $x^2+y^3+yz^3$ which contains the smooth $z$-axis $C$ passing through the origin $p$. 
\end{proof}

In~\cite[Remark 5.2.1]{Zeuthen}, Hartshorne points out that in the local ring of an $\rdA_n$ singularity with the specific equation $xy-z^{n+1}$, every element of the class group of the local ring is represented by a curve that is smooth at that point. We now show that there is a locally smooth representative for any divisor class of any rational double point surface singularity.

\begin{prop}\label{smoothrep} For any surface $S$, if $p\in S$ such that $\O_{S,p}$ defines a rational double point, then {\em each} class of  $\Cl \O_{S,p}$ is given by a curve that is smooth at $p$. 
\end{prop}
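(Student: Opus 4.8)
The plan is to pass to the minimal resolution $\pi\colon \widetilde S \to S$ and to realize every class by pushing down a smooth curve that is transverse to a single exceptional component. Since whether a given class has a representative smooth at $p$ depends only on the analytic type of $\O_{S,p}$, I may work with the standard $\rdA_n,\rdD_n,\rdE_n$ configuration of $(-2)$-curves $E_1,\dots,E_k$ on $\widetilde S$, recalling that $\Cl \O_{S,p}$ is computed by $[C]\mapsto(\widetilde C\cdot E_j)_j$ and that $e_j$ is the class of a curve meeting only $E_j$, and once. The zero class is realized trivially by a curve missing $p$; note that in fact no curve through $p$ that is smooth there can represent $0$, since a Cartier divisor with regular quotient would force $\O_{S,p}$ to be regular, contradicting that $p$ is a double point. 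Thus I only need to produce smooth representatives for the \emph{nonzero} classes.

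For a fixed component $E_j$, choose a point $q\in E_j$ lying on no other $E_i$ and a smooth curve germ $\gamma\subset\widetilde S$ meeting $E_j$ transversally at $q$, and set $C=\pi(\gamma)$. Then $\gamma$ is the strict transform of $C$, so $C$ meets only $E_j$, and once, giving $[C]=e_j$. The key point is to control the singularity of $C$ at $p$: writing $Z=\sum_i z_iE_i$ for the fundamental cycle (which for a rational double point is also the maximal-ideal cycle, $\pi^*\fm\cdot\O_{\widetilde S}=\O_{\widetilde S}(-Z)$), the multiplicity identity $\mult_p C=\widetilde C\cdot Z=z_j$ holds. Hence $C$ is reduced and irreducible of multiplicity $z_j$ at $p$, so it is smooth at $p$ precisely when $z_j=1$. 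This is exactly the mechanism observed in the model computations of the preceding propositions, where transverse curves at the ends of the diagram pushed down to lines and conics smooth at $p$.

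It therefore remains to check that the classes $e_j$ attached to the coefficient-one vertices already exhaust $\Cl\O_{S,p}$. This is a finite verification from the data recorded above: for $\rdA_n$ every $z_j=1$ and $e_j=je_1$ runs over all of $\Z/(n+1)\Z$; for $\rdD_n$ the coefficient-one vertices are $E_1,E_{n-1},E_n$, whose classes are exactly the nonzero elements (three of them when $n$ is odd, and the elements $2,1,3\in\Z/4\Z$ when $n$ is even); for $\rdE_6$ the two end vertices give $\pm1\in\Z/3\Z$; for $\rdE_7$ the single end vertex gives the nonzero element of $\Z/2\Z$; and $\rdE_8$ is already a UFD. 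In every case each nonzero class equals $e_j$ for a vertex with $z_j=1$, and the construction of the previous paragraph then supplies a representative smooth at $p$.

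The main obstacle is precisely this last coincidence: that the coefficient-one (minuscule) vertices of the fundamental cycle suffice to name \emph{every} element of the discriminant group $\Cl\O_{S,p}$ --- equivalently, that the minuscule fundamental weights represent all cosets of the weight lattice modulo the root lattice. Coupled with the multiplicity identity $\mult_p C=\widetilde C\cdot Z$, which is what upgrades ``correct class'' to ``smooth at $p$,'' this is the real content that must be drawn from the $\rdA_n,\rdD_n,\rdE_n$ classification; fortunately both ingredients reduce to the per-type bookkeeping already assembled in the paper.
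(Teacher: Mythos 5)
Your combinatorial bookkeeping is sound: the coefficient-one (admissible) vertices of the fundamental cycle do exhaust the nonzero classes of $\Cl \widehat{\O}_{S,p}$ in every type, and the identity $\mult_p \pi(D) = D\cdot \xi_0$ is the right tool. (One slip: for $\rdD_n$ the completed class group is $\Z/4\Z$ for $n$ \emph{odd} and $\Z/2\Z \oplus \Z/2\Z$ for $n$ \emph{even}, not the reverse --- though this matches a typo in Section 4.2 of the paper, and it is harmless since in either case $e_1, e_{n-1}, e_n$ are the three nonzero elements.) The genuine gap is the geometric step: ``choose a smooth curve germ $\gamma$ meeting $E_j$ transversally at $q$ and set $C = \pi(\gamma)$.'' A transverse germ at a point of $E_j$ is a formal or analytic arc; it defines a height-one prime of the completion and hence a class in $\Cl \widehat{\O}_{S,p}$, but not an element of $\Cl \O_{S,p}$. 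The proposition concerns $\Cl \O_{S,p}$ for an \emph{arbitrary} surface $S$, and for such $S$ the subgroup $\Cl \O_{S,p} \subset \Cl \widehat{\O}_{S,p}$ is typically proper --- realizing all subgroups, including $0$, is the main theorem of this paper. If instead you require $\gamma$ to be algebraic (a closed integral curve on the resolution), its existence is exactly what must be proved, and it can fail outright: by your own correct remark that a curve through $p$ smooth at $p$ is never Cartier there, a surface with $\O_{S,p}$ a UFD (these exist by Theorem~\ref{ufd}) carries \emph{no} algebraic curve through $p$ smooth at $p$, hence none whose strict transform is transverse to any $E_j$. So for a given nonzero $c \in \Cl \O_{S,p}$, identified with an admissible $e_j$, you still owe an argument producing an algebraic curve in that class smooth at $p$; your opening reduction ``this depends only on the analytic type'' assumes precisely the point at issue. (A further wrinkle in the algebraic reading: even if the germ of $\gamma$ at $q$ is smooth and transverse, its closure may meet $E$ at additional points, so $\pi(\gamma)$ can acquire extra branches at $p$.)

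The paper closes this gap by never leaving the algebraic category: it starts from a given algebraic representative $C$ of the class, forms $D = \widetilde{C} + G$ with $G$ an \emph{effective} exceptionally supported divisor normalizing the intersection numbers (the case-by-case claim in the proof, which is the effective counterpart of your minuscule-vertex count), and then --- this is the step you are missing --- moves $D$ inside a linear system $|\M(D)|$ on the resolution, where $\M$ is built from a very ample sheaf and shown, via an $H^1$-vanishing and a restriction diagram, to make the system base-point free near $E$. Bertini then yields a general member $D'$ smooth in a neighborhood of $E$; since $D' - D$ is cut out by a section of $\M$ and $G$ is exceptional, $\pi(D')$ has the same class as $C$ in $\Cl \O_{S,p}$, and $\mult_p \pi(D') = D'\cdot \xi_0 \le 1$ gives smoothness at $p$. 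It is this linear-system argument that converts the numerical normal form into an honest algebraic curve in the same local class, and without it (or some substitute) your proof does not establish the statement.
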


\begin{proof}
Let $\pi:X\ra S$ be the minimal resolution of the singularity at $p$, with exceptional curve $E=\bigcup E_i$, and let $C$ be a curve on $S$; we wish to find a locally smooth curve $C'$ on $S$ giving the same class as $C$ in $\Cl \O_{S,p}$. Now let $\xi_0$ be the {\em fundamental cycle} on $X$ ({\em Cf.} \cite{artin}, pp. 131-2), which is an effective, exceptionally supported divisor with the property that $\xi_0$ has nonpositive intersection with each exceptional curve and is the least nonzero divisor with this property. Using the $n$-tuple $(a_1, \dots, a_n)$ as shorthand for $a_1E_1+\dots + a_nE_n$, we note the fundamental cycles for the various rational double points:
$${\mathbf A}_n: (1,1,\dots, 1). \,\, {\mathbf D}_n: (1, 2, 2, 2, . . . , 2, 1, 1).\,\, {\mathbf E}_6: (1, 2, 3, 2, 2, 1).$$
$${\mathbf E}_7: (1, 2, 3, 4, 2, 3, 2). \,\, {\mathbf E}_8:  (2, 3, 4, 5, 6, 3, 4, 2).$$
Then for a non-exceptional locally smooth curve $D$ on $X$, $\mult_p(\pi(D)) = D.\xi$ (\cite[Prop. 5.4]{B} or \cite[Corollary 2.3]{J2} in the multiplicity-$1$ case, which is what we need here).

Next, we claim that, given a divisor $F$ meeting each $E_i$ nonnegatively, there exists an effective exceptionally supported divisor $G$ such that $(F+G).E_i = 0$ for all but at most one $i$ and, if such an $i$ exists, $(F+G).E_i=1$ and $E_i$ appears with coefficient $1$ in $\xi$ (\cite[Prop. 5.5]{B}; that is, in the terminology of \cite{J2}, $E_i$ is {\em admissible}). We will prove this by considering each singularity type ${\mathbf A}_n, {\mathbf D}_n, {\mathbf E}_n$ separately. Following the established numbering scheme, we will denote the intersection multiplicities of a divisor with the $E_i$ as an ordered $n$-tuple; suppose then that $F.E_i=s_i, 1\le i\le n$ (and continue with this notation as we modify $F$).

For an ${\mathbf A}_n$, proceed by induction on $\sum s_i$. If this number is $\le 1$, we are done, so assume $\sum s_i \ge 2$. If there are two indices $i<j$ such that $s_i>0,s_j>0$, add $E_i+E{i+1} + \dots +E_j$ to $F$. This new divisor has its multiplicities with $E_i$ and $E_j$ reduced by $1$ but those with $E_{i-1}$ and $E_{j+1}$ -- if they exist -- increased by $1$. Rewrite $i$ for $i-1$ and $j$ for $j+1$ and ontinue the process until $i=1$ or $j=n$, at which point adding the divisor reduces $\sum s_i$ and we are done. If there is a single $i$ such that $s_i>2$, add $E_i$ and reduce to the previous case.

For a ${\mathbf D}_n$, first suppose $n=4$. Add copies of $E_1, E_3, E_4$ as needed until $s_1, s_3, s_4$ are $\le 1$ and then add $\xi_0$ until $s_2=0$. If only one of $s_1,s_3,s_4$ is $1$, we are done. If (without loss of generality) $s_1=s_3=1,s_4=0$, add $(1,1,1,0)$. If $s_1=s_3=s_4=1$, add $(2,3,2,2)$. For $n>4$, $E_2, \dots, E_n$ form a ${\mathbf D}_{n-1}$, so by induction we can assume that at most one of $s_2, s_{n-1}, s_n$ is $1$ and all other $s_i, i>1$ are $0$ (note that these transformations can only increase $s_1$). Adding $(2,2, \dots, 2,1,1)$ decreases $s_1$ by $2$ and leaves the others unchanged, so assume $s_1$ is $1$ or $0$. Add $\xi_0$ if necessary so that $s_2=0$. If without loss of generality $s_1=s_{n-1}=1, s_n=0$, add $(1,1, \dots, 1,0)$. 

For an ${\mathbf E}_6$, note that we can add $\xi_0$ as needed to assume that $s_4=0$ without changing the other intersection numbers. Use the case ${\mathbf A}_5$ on the remaining $E_i$ to reduce at most one of the $s_i, i\neq 4$ to $1$. If $s_1 = 1$ or $s_6=1$, we are done. If without loss of generality $s_2=1$, add $(1,2,2,1,1,0)$, and if $s_3=1$, add $(1,2,3,0,2,1)$.

For an ${\mathbf E}_7$, note that $\xi_0.E_i = -\delta_i$, so we can use $\xi_0$ to reduce $s_7$ as needed. Use the $\rdD_6$ case on $E_1,\dots, E_6$ to reduce to three cases. If $s_1=1$, we are done. If $s_5=1$, add $(0,1,2,3,2,2,1)$. If $s_6=1$, add $(2,4,6,8,4,6,3)$.

For an ${\mathbf E}_8$, note that we can reduce $s_1$ to $0$ while leaving the others unchanged by adding $\xi_0$. Thus by treating the ${\mathbf E}_7$ formed by $E_2, \dots, E_8$, we see that the only case to consider is $s_2=1$ and $s_i=0$ for $i\neq 2$. In this case add $(3, 6, 8, 10, 12, 6, 8, 4)$.

Now let $\L$ be a very ample invertible sheaf on $X$ and let $D=\tilde{C}+G$ be a divisor satisfying (ii) above. If $D.E_i=0$ for all $i$, take $\M = \L$; otherwise, for the $i$-value for which $D.E_i=1$, let $\M = 2\L + (\L.E_i)E_i$. In this latter case, note that $\M$ is generated by global sections away from $E_i$ and, moreover, $\M$ is very ample off of $E_i$ and separates points off of $E_i$ from those on $E_i$. We claim that $\M$ is generated everywhere by global sections. Note that $\M|_{E_i} = \O_{E_i}$; one then can show by induction that $H^1(X,\M)=0$. Let $D'$ be a global section of $\M(D)$; then in the exact diagram

$$\begin{array}{ccccccccc}
& & 0 & & 0 & & 0 & & \\
& & \downarrow & & \downarrow & & \downarrow & & \\
0 & \to & \M(-E)  & \to & \O_X(D'-E) & \to & \O_D(D'-E) & \to & 0 \\
& & \downarrow & & \downarrow & & \downarrow & & \\
0 & \to & \M & \to & \O_X(D') & \to & \O_D(D') & \to & 0 \\
& & \downarrow & & \downarrow & & \downarrow & & \\
0 & \to & O_E  & \to & \O_E(D')  & \to & \O_{E\cap D'} (D') & \to & 0 \\
& & \downarrow & & \downarrow & & \downarrow & & \\
& & 0 & & 0 & & 0 & & 
\end{array}$$
the outer vertical columns are exact on global sections, so $H^1(X,\M(-E))=0$ since $H^1(X,\M)=0$; thus the map $H^1(X,\O_X(D'-E))\ra H^1(X, \O_X(D'))$ is injective, and therefore the middle column is exact on global sections as well. So $\O_X(D')$ is base-point free in all cases, and thus the general $D'$ is smooth in a neighborhood of $E$. By (i) above, then, its image $C'$ in $S$ is smooth at $p$. Moreover, as $D'=\tilde{C'}$ differs from $\tilde{C}$ only by a sum of exceptional divisors, all of which give class $0$ in $\Cl\Spec\O_{S,p}$, and $D'-D$, which is Cartier since it is a section of $\M$, the classes of $C'$ and $C$ are the same in $\Cl\Spec\O_{S,p}$.

Note that this does not say that we can produce a smooth curve whose strict transform meets any given $E_i$; {\em a priori} only those $E_i$ occurring with coefficient $1$ in $\xi$ are possible. However, there is an admissible representative for any element of the class group of the local ring.

Note also that if $S$ has only rational double point singularities, by repeating this process we can find a (globally) smooth curve having the same images in the class groups of all of the singular points as a given curve $C$.
\end{proof}

\end{document}